\providecommand{\lBrack}{[\![}
\providecommand{\rBrack}{]\!]}
\title{An alternative approach to Fr\'echet derivatives}
\author{Shane Arora}
\author{Hazel Browne}
\author{Daniel Daners%
  \footnote{Email: saro0188@uni.sydney.edu.au (S.~Arora), %
    hbro4811@uni.sydney.edu.au (H.~Browne), %
    daniel.daners@sydney.edu.au (D.~Daners)}}%
\affil{School of Mathematics and Statistics, University of Sydney,
  NSW 2006, Australia}
\date{11 February 2019}
\numberwithin{equation}{section}
\numberwithin{figure}{section}
\theoremstyle{plain}
\newtheorem{theorem}{Theorem}[section]
\newtheorem{proposition}[theorem]{Proposition}
\newtheorem{corollary}[theorem]{Corollary}
\theoremstyle{definition}
\newtheorem{definition}[theorem]{Definition}
\newtheorem{example}[theorem]{Example}
\theoremstyle{remark}
\newtheorem{remark}[theorem]{Remark}
\DeclareMathOperator{\repart}{Re}
\begin{document}
\maketitle
\begin{abstract}
  We discuss an alternative approach to Fr\'echet derivatives on Banach
  spaces inspired by a characterisation of derivatives due to
  Carath\'eodory. The approach allows to reduce many questions of
  differentiability to a question of continuity. We demonstrate how that
  simplifies the theory of differentiation, including the rules of
  differentiation and the Schwarz Lemma on the symmetry of second order
  derivatives. We also provide a short proof of the differentiable
  dependence of fixed points in the Banach fixed point theorem.
\end{abstract}

\paragraph{MSC Classification (2010):} 46G05; 26B05
\paragraph{Keywords:} Fr\'echet derivative, Schwarz Lemma, Banach Fixed Point
Theorem, Mean Value Theorems.

\section{Introduction}
\label{sec:intro}
The aim of this paper is to promote an alternative approach to Fr\'echet
derivatives of functions defined on open subsets of a real or complex
Banach space. The main feature is a simplification of many proofs by
reducing questions of differentiability to a question of continuity. The
approach is inspired by Carath\'eodory's characterisation of
differentiability of functions on the complex plane from
\cite{caratheodory:50:ft1} and its extension to vector valued functions
in \cite{acosta:94:fvc,cabrales:06:dfb}.

To motivate our approach let us start with the notion of tangent to the
graph of a function $f\colon J\to\mathbb R$, where $J\subseteq\mathbb R$
is an open interval. Given $x\in J$, the tangent to the graph of $f$ at
$(x,f(x))$ is by definition the limit of secants through the points
$(x,f(x))$ and $(y,f(y))$ as $y\to x$. The slope of that secant is given
by
\begin{equation}
  \label{eq:slope-single}
  \varphi_{x}(y):=\frac{f(y)-f(x)}{y-x}
\end{equation}
and we say that $f$ is differentiable at $x$ if $\varphi_{x}(y)$ has a
limit as $y\to x$. In other words, $\varphi_{x}$ has an extension from
$J\setminus\{x\}$ to $J$ that is continuous at $x$. Hence, $f$ is
differentiable at $x\in J$ if and only if there exists a function
$\varphi_{x}\colon J\to\mathbb R$, continuous at $y=x$, such that
\begin{equation}
  \label{eq:caratheodory-single}
  f(y)=f(x)+\varphi_{x}(y)(y-x)
\end{equation}
for all $y\in J$. By design, the derivative at $x$ is given by
$f'(x):=\varphi_{x}(x)$. We call $\varphi_{x}$ the \emph{slope function}
of $f$ at $x$.  The continuity of $\varphi_x$ at $x$ built into the
definition offers the biggest advantage over a traditional approach.

For functions between Banach spaces we can apply an idea similar to
\eqref{eq:caratheodory-single}.
\begin{definition}
  \label{def:caratheodory-diff}
  Let $E,F$ be real or complex Banach spaces and $U\subseteq E$
  open. Suppose that $f\colon U\to F$ and let $x\in U$. We say that $f$
  is \emph{Carath\'eodory differentiable} at $x$ if there exists a map
  $\Phi_x\colon U\to\mathcal L(E,F)$, continuous at $x$, such that
  \begin{equation}
    \label{eq:caratheodory-diff}
    f(y)=f(x)+\Phi_x(y)(y-x)
  \end{equation}
  for all $y\in U$. Here, $\mathcal L(E,F)$ is the space of bounded
  linear operators from $E$ to $F$ and continuity is with respect to the
  operator norm in $\mathcal L(E,F)$. We call $\Phi_x$ a \emph{slope
    function} of $f$ at $x$ and
  \begin{equation}
    \label{eq:deriv-at-a}
    Df(x):=\Phi_x(x)\in\mathcal L(E,F).
  \end{equation}
  the \emph{derivative} of $f$ at $x$.
\end{definition}
As $\Phi_x$ is continuous at $x$, it is a direct consequence of
\eqref{eq:caratheodory-diff} that $f$ is continuous at every point at
which it is differentiable.

We show in Section~\ref{sec:equivalence} that the above notion of
derivative is equivalent to the usual notion of \emph{Fr\'echet
  derivative}.  Adding to the exposition in
\cite{acosta:94:fvc,cabrales:06:dfb} we provide some geometric insight
and allow for any real or complex Banach space. As a demonstration of
the simplicity of the approach we then establish the standard rules of
differentiation in Section~\ref{sec:rules}.

To further support the case for our alternative approach to derivatives,
we provide short and conceptually simple proofs of two further
results. First, in Section~\ref{sec:symmetry}, we establish the Schwarz
Lemma about the symmetry of second order derivatives. Second, in
Section~\ref{sec:fixed-points}, we provide a simple proof of the
differentiable dependence of fixed points in the Banach Fixed Point
Theorem. That theorem can be applied directly to prove the inverse
function theorem or the differentiable dependence on parameters of
solutions to ordinary differential equations; see \cite{brooks:09:cmp}
for many such applications.

If $f$ is differentiable at every point $x\in U$, then it is convenient
to view the slope function as a function of two variables and write
\begin{equation}
  \label{eq:phi-two-var}
  \Phi(x,y):=\Phi_x(y),
\end{equation}
where $x$ is the point where we differentiate. By definition, the map
$y\mapsto\Phi(x,y)$ is continuous at $y=x$ and $Df(x)=\Phi(x,x)$. We
show that in general, the map $x\mapsto\Phi(x,y)$ cannot be expected to
be continuous at $x=y$, not even if $f$ is very smooth. In contrast to
that, we show that if $E$ is finite dimensional, then there always
exists a slope function that is \emph{separately} continuous on the
diagonal $x=y$ as a function of $x$ and $y$. Such examples are discussed
in Section~\ref{sec:discussion}.

While an arbitrary slope function can behave badly as a function of $x$
regardless of smoothness of $f$, we show that $f$ is continuously
differentiable at $x$ if and only if there exists a slope function that
is \emph{jointly} continuous on the diagonal as a function of both
variables. The proof, given in Section~\ref{sec:continuous-diff},
requires a mean value inequality which, unlike most references, we prove
for functions between complex Banach spaces.

We conclude this introduction by providing some historical comments. The
core idea goes back to the definition of derivative given by
Carath\'eodory in \cite{caratheodory:50:ft1}. However he does not really
make use of his definition, but instead reverts to a standard
approach. Others much later observed the usefulness. In the single
variable case, the most complete discussion appears in
\cite{kuhn:91:dac}.  In \cite[Section~III.6]{hairer:08:abh}, a
comparison of the definitions of derivatives due to Cauchy, Weierstrass
and Carath\'eodory is given, and Carath\'eodory's definition is used to
prove the standard rules of differentiation. The text
\cite{bartle:11:ira} uses Carath\'eodory's approach to prove some rules
of differentiation, but not beyond that. The approach is used quite
consistently in the calculus textbook \cite{martinez:98:cdc}.

The first time the definition seems to appear in the multi-variable case
is in \cite{botsko:85:dfs}. The most comprehensive exposition is given
in \cite{acosta:94:fvc}. There is a generalisation to functions on
Banach spaces in \cite{cabrales:06:dfb}, and \cite{pinzon:99:dcr}
focuses on the two variable case, providing comparisons with other
notions of differentiability. The definition also appears in
\cite[Section~IV.3]{hairer:08:abh}.

\section{Equivalence with Fr\'echet Derivatives}
\label{sec:equivalence}
Before we start our discussion of differentiability we need some
notation. The norm of $B\in\mathcal L(E,F)$ is the operator norm given
by
\begin{displaymath}
  \|B\|_{\mathcal L(E,F)}
  :=\sup_{x\in E\setminus\{0\}}\frac{\|Bx\|_F}{\|x\|_E}
  =\sup_{\|x\|_E\leq 1}\|Bx\|_F
  =\sup_{\|x\|_E=1}\|Bx\|_F;
\end{displaymath}
see for instance \cite[Section~II.1]{taylor:80:ifa}. A special case is
the dual space $E':=\mathcal L(E,\mathbb K)$ of $E$, where
$\mathbb K=\mathbb R$ if $E$ is a real Banach space and
$\mathbb K=\mathbb C$ if $E$ is complex. The dual norm
$\|{\cdot}\|_{E'}$ is just the operator norm in
$\mathcal L(E,\mathbb K)$. When no confusion is likely we denote the
norms on $E$ and $F$ simply by $\|{\cdot}\|$.

Let $f\colon U\to F$, where $U\subseteq E$ is open. The usual definition
of the derivative at $x\in U$ is the Fr\'echet derivative. The idea is
to find a linear operator $A\in\mathcal L(E,F)$ providing the \emph{best
  linear approximation} of $f$ near $x\in U$ in the sense that
\begin{equation}
  \label{eq:frechet-diff}
  \lim_{y\to x}\frac{f(y)-f(x)-A(y-x)}{\|y-x\|}=0
\end{equation}
in $F$. The map $A$ is called the derivative of $f$ at $x$ and is
denoted by $Df(x)$. The name goes back to Fr\'echet
\cite{frechet:12:ndt}, but Fr\'echet attributes the definition to Stolz
\cite{stolz:93:gdi}.

We now show that Fr\'echet's and Carath\'eodory's notions of derivatives
are equivalent. This is shown in \cite{acosta:94:fvc,cabrales:06:dfb},
but unlike these references we include a proof emphasising the geometric
significance of the constructions and allow for complex Banach spaces.

Assume that $f$ is Carath\'eodory differentiable in the sense of
Definition~\ref{def:caratheodory-diff} and set $A:=\Phi_x(x)$. Then,
\begin{multline*}
  \frac{\|f(y)-f(x)-A(y-x)\|}{\|y-x\|}
  =\frac{\bigl\|\bigl(\Phi_x(y)-\Phi_x(x)\bigr)(y-x)\bigr\|}{\|y-x\|}\\
  \leq\bigl\|\Phi_x(y)-\Phi_x(x)\bigr\|_{\mathcal L(E,F)}
  \frac{\|y-x\|}{\|y-x\|}
  =\bigl\|\Phi_x(y)-\Phi_x(x)\bigr\|_{\mathcal L(E,F)}.
\end{multline*}
Due to the continuity of $y\mapsto\Phi_x(y)$ at $x$ we know that
$\bigl\|\Phi_x(y)-\Phi_x(x)\bigr\|_{\mathcal L(E,F)}\to 0$ as $y\to x$
and hence \eqref{eq:frechet-diff} holds, showing that $f$ is Fr\'echet
differentiable.

Assuming that $f$ is Fr\'echet differentiable at $x$, we need to
construct a slope function $\Phi_x$ at $x$. Given $y\in U$ with
$y\neq x$, that slope function is uniquely defined in the direction of
$y-x$ by \eqref{eq:caratheodory-diff}, namely
$\Phi_x(y)(y-x)=f(y)-f(x)$. We then need to define $\Phi_x$ on a
subspace complementary to the line $\{t(y-x)\colon t\in\mathbb
K\}$. Such a complement is given by the kernel of a linear functional
$\ell(x,y)\in E'$ with $\langle\ell(x,y),y-x\rangle\neq 0$. For
$z\in\ker(\ell(x,y))$ we define $\Phi_x(y)z=Df(x)z$. That construction
is possible by the Hahn-Banach theorem which guarantees the existence of
a bounded linear functional $\ell(x,y)\in E'$ such that
$\langle\ell(x,y),y-x\rangle=\|y-x\|$ and $\|\ell(x,y)\|_{E'}=1$; see
\cite[Corollary~1.3]{brezis:11:fa}. Geometrically this means that, in
the direction of $\ker(\ell(x,y))$, the slope function $\Phi_x$ is
determined by the tangent of $f$ at $(x,f(x))$; see
Figure~\ref{fig:canonical-slope}. We can write
\begin{equation}
  \label{eq:canonical-slope-function}
  \Phi_x(y)z:=
  \begin{cases}
    \dfrac{f(y)-f(x)-Df(x)(y-x)}{\|y-x\|}\langle\ell(x,y),z\rangle
    +Df(x)z&\text{if $x\neq y$}\\
    Df(x)z&\text{if $x=y$}
  \end{cases}
\end{equation}
for all $z\in E$. This is a slope function since
$\langle\ell(x,y),y-x\rangle=\|y-x\|$ and so by construction
$f(y)=f(x)+\Phi_x(y)(y-x)$.  Moreover, since $\|\ell(x,y)\|_{E'}=1$
\begin{displaymath}
  \begin{split}
    \|\Phi_x(y)z-\Phi_x(x)z\|
    &=\dfrac{\|f(y)-f(x)-Df(x)(y-x)\|}{\|y-x\|}
    |\langle\ell(x,y),z\rangle|\\
    &\leq\dfrac{\|f(y)-f(x)-Df(x)(y-x)\|}{\|y-x\|} \|z\|
  \end{split}
\end{displaymath}
for all $z\in E$. By definition of the operator norm and since $f$ is
Fr\'echet differentiable,
\begin{displaymath}
  \|\Phi_x(y)-\Phi_x(x)\|_{\mathcal L(E,F)}
  \leq\dfrac{\|f(y)-f(x)-Df(x)(y-x)\|}{\|y-x\|}
  \xrightarrow{y\to x}0.
\end{displaymath}
Hence $f$ is Carath\'eodory differentiable. Note that if the dual norm
on $E'$ is strictly convex, then the functional $\ell(x,y)$ given by the
duality map is uniquely determined; see
\cite[Exercise~1.1]{brezis:11:fa}. For this reason we call
\eqref{eq:canonical-slope-function} the \emph{canonical slope function}.
We note that it is sufficient for $\ell(x,y)\in E'$ to have a bound
independent of $y$ in a neighbourhood of $x$ for the above arguments to
work.
\begin{figure}[ht]
  \centering
  \begin{tikzpicture}[scale=0.8]
    \coordinate (X) at (1.3,3.3);%
    \coordinate (V) at (-1.3,-0.9);%
    \coordinate (T) at (-2,0.65);%
    \begin{scope}
      \draw[clip] (-2,1.8)%
      .. controls (-1,3.3) and (0,3.8) .. (0.5,4)%
      .. controls (2,4.2) and (3,3.6) .. (4,3)%
      .. controls (2.9,2.5) and (2.2,1.8) .. (1.5,.7)%
      .. controls (0,1.8) and (-1.3,2) .. (-2,1.8)%
      -- cycle;%
      \path[ball color=lightgray] (1.2,0.8) circle (4cm);
    \end{scope}
    \path (1.5,0.7) node[right] {graph of $f$};%
    \draw[dashed] (X) -- ++(V);
    \begin{scope}[font=\footnotesize]
      \path[fill=black] (X) circle[radius=1.25pt] node[right=6pt]
      {$(x,f(x))$};%
      \draw[fill=white] ($(X)+(V)$) circle[radius=1.25pt] node[below
      right] {$(y,f(y)$};%
    \end{scope}
    \draw[gray] ($(X)+0.9*(T)-1.1*(V)$) -- ($(X)-1.1*(V)-1.2*(T)$)%
    -- ($(X)+2*(V)-1.2*(T)$) -- ($(X)+2*(V)+0.9*(T)$) -- cycle;%
    \begin{scope}[thick]
      \draw (X) -- ($(X)-1.4*(V)$) node[right] {secant};%
      \draw ($(X)+(V)$) -- ($(X)+2.1*(V)$); \draw ($(X)+(T)$) --
      ($(X)-1.3*(T)$) node[right] {tangent};%
    \end{scope}
    \draw[->] (-2.5,0) -- (-2.5,4.5) node[right] {$z$};%
    \draw[->] (-2.5,0) -- (1,-1.5) node[below] {$x_1$}; \draw[->]
    (-2.5,0) -- (0.5,.75) node[below] {$x_2$};
  \end{tikzpicture}
  \caption{Plane spanned by secant and tangent to define the canonical
    slope function.}
  \label{fig:canonical-slope}
\end{figure}
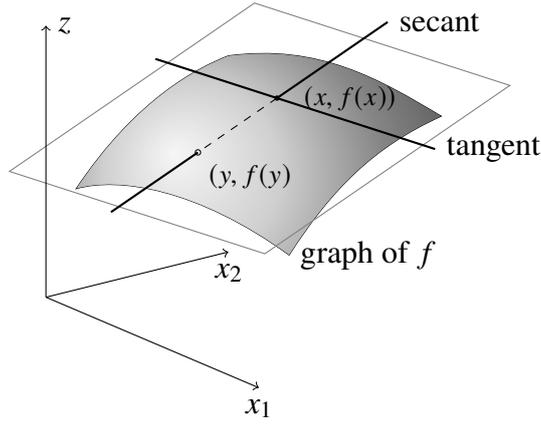
\par
We next look at some cases where it is possible to make a natural choice
for $\ell(x,y)$.
\begin{example}
  (a) If $E=H$ is a finite or infinite dimensional Hilbert space with
  inner product $\langle\cdot\,,\cdot\rangle_H$, then $\ell(x,y)$ is the
  orthogonal projection onto the subspace spanned by $y-x$, or more
  precisely the component in that direction. This is given by
  \begin{equation}
    \label{eq:l-Hilbert}
    \langle\ell(x,y),z\rangle
    :=\Bigl\langle\frac{y-x}{\|y-x\|_H},z\Bigr\rangle_H
  \end{equation}
  and illustrated in Figure~\ref{fig:canonical-slope}.  For a complex
  Hilbert space we take the inner product conjugate linear in the first
  argument.

  (b) If $E=L^p(\Omega)$ for some measure space $(\Omega,\mu)$ with
  $1<p<\infty$, then
  \begin{displaymath}
    \langle\ell(u,v),w\rangle
    :=\frac{1}{\|v-u\|_p^{p-1}}\int_\Omega|v-u|^{p-2}(\overline{v-u})w\,d\mu.
  \end{displaymath}
  Clearly $\langle\ell(u,v),v-u\rangle=\|v-u\|_p$ and by H\"older's
  inequality $|\langle\ell(u,v),w\rangle|\leq \|w\|_p$, so
  $\|\ell(u,v)\|_{(L^p)'}=1$.  In the Hilbert space case $p=2$ this
  coincides with \eqref{eq:l-Hilbert}.

  (c) If the norm $\|{\cdot}\|_E$ on $E$ is equivalent to a norm
  $\|{\cdot}\|_H$ induced by an inner product
  $\langle\cdot\,,\cdot\rangle_H$, then we can choose
  \begin{displaymath}
    \bigl\langle\ell(x,y),z\bigr\rangle
    :=\frac{\|y-x\|_E}{\|y-x\|_H}
    \Bigl\langle\frac{y-x}{\|y-x\|_H},z\Bigr\rangle_H.
  \end{displaymath}
  In particular, this is the case when working on any finite dimensional
  space such as $\mathbb R^n$ or $\mathbb C^n$, where every norm is
  equivalent to the Euclidean norm. We do not necessarily have
  $\|\ell(x,y)\|_{E'}=1$, but we still maintain the required uniform
  bound.
\end{example}
Given the non-uniqueness of complements of the space spanned by $y-x$
used to construct the slope function
\eqref{eq:canonical-slope-function}, it is clear that the slope function
cannot be unique unless $\dim(E)=1$. Also, the slope function does not
need to be of the form \eqref{eq:canonical-slope-function}. For examples
we refer to \cite[Section~2]{acosta:94:fvc} and to our more
comprehensive discussion of slope functions in
Section~\ref{sec:discussion}. However, the derivative is in fact
unique. We provide a proof, simpler than that given in
\cite[Section~2]{acosta:94:fvc}.
\begin{proposition}[Uniqueness of derivative]
  Let $E,F$ be Banach spaces, $U\subseteq E$ open and $f\colon U\to F$
  Carath\'eodory differentiable at $x\in U$. Then the derivative at $x$
  is unique.
\end{proposition}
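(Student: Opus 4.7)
The plan is to exploit the defining continuity of slope functions at $x$, which is precisely what makes the Carath\'eodory approach convenient: we reduce uniqueness of $Df(x)$ to a one-line continuity argument.

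Suppose $\Phi_x$ and $\Psi_x$ are two slope functions of $f$ at $x$, both continuous at $x$ with values in $\mathcal L(E,F)$. Subtracting the two identities \eqref{eq:caratheodory-diff} yields
\begin{equation*}
  \bigl(\Phi_x(y)-\Psi_x(y)\bigr)(y-x)=0
\end{equation*}
for all $y\in U$. I would then fix an arbitrary direction $h\in E$ with $\|h\|=1$ and, using that $U$ is open, choose $t_0>0$ small enough that $x+th\in U$ for all $0<t\le t_0$. Substituting $y=x+th$ and dividing by $t$ (which is legitimate since $\Phi_x(x+th)-\Psi_x(x+th)$ is a bounded linear operator) gives $\bigl(\Phi_x(x+th)-\Psi_x(x+th)\bigr)h=0$ for every such $t$.

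Letting $t\to 0^+$ and invoking the continuity of both $\Phi_x$ and $\Psi_x$ at $x$ with respect to the operator norm (which implies in particular pointwise continuity of $y\mapsto \Phi_x(y)h$ and $y\mapsto\Psi_x(y)h$), we conclude $\bigl(\Phi_x(x)-\Psi_x(x)\bigr)h=0$. Since $h$ was arbitrary, $\Phi_x(x)=\Psi_x(x)$, and the derivative $Df(x)$ defined by \eqref{eq:deriv-at-a} is independent of the choice of slope function.

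The only subtlety to watch is that $y$ must remain in $U$ so that the slope function identity is applicable, which is handled by the openness of $U$. Beyond that there is essentially no obstacle: the argument is a clean demonstration of the guiding principle advertised in the introduction, namely that the differentiability built into the continuity of $\Phi_x$ at $x$ immediately yields uniqueness at the single point $x$, without needing to invoke the Fr\'echet definition or the Hahn--Banach construction from \eqref{eq:canonical-slope-function}.
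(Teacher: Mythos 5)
Your proof is correct and follows essentially the same route as the paper: restrict to the ray $y=x+th$, divide by $t$, and let $t\to 0^+$ using the continuity of the slope function at $x$. The only cosmetic difference is that you subtract two slope functions directly, whereas the paper identifies $\Phi_x(x)z$ with the one-sided directional derivative $\lim_{t\to 0+}\bigl(f(x+tz)-f(x)\bigr)/t$, which is manifestly independent of the choice of slope function.
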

\begin{proof}
  Let $\Phi_x\colon U\to\mathcal L(E,F)$ be an arbitrary slope
  function. Fix $z\in E$ and suppose that $t_0>0$ is small enough so
  that $x+tz\in U$ for all $t\in(0,t_0]$. This is possible since $U$ is
  open. By definition of $\Phi_x$ we have $f(x+tz)-f(x)=\Phi_x(x+tz)tz$
  for all $t\in(0,t_0]$ and so by the continuity of $\Phi_x$ at $x$,
  \begin{equation}
    \label{eq:derivative-unique}
    \lim_{t\to 0+}\frac{f(x+tz)-f(x)}{t}
    =\lim_{t\to 0+}\Phi_x(x+tz)z
    =\Phi_x(x)z.
  \end{equation}
  As the left hand side of \eqref{eq:derivative-unique} is independent
  of the particular slope function $\Phi_x$, it follows that
  $\Phi_x(x)z$ is uniquely determined by $f$, $x$ and $z$. As this is
  true for every $z\in E$ the derivative is unique.
\end{proof}
\begin{remark}
  If $f\colon\mathbb R^n\to\mathbb R^m$ (or
  $f\colon\mathbb C^n\to\mathbb C^m$), then the identity
  \eqref{eq:derivative-unique} also shows that the matrix representation
  of $Df(x)$ with respect to the standard basis is given by the Jacobian
  matrix. Indeed, if we choose $z=e_k$ to be the $k$-th standard basis
  vector of $\mathbb R^n$ (or $\mathbb C^n$), then the left hand side of
  \eqref{eq:derivative-unique} by definition is the partial derivative
  of $f$ with respect to $x_k$. Hence,
  \begin{displaymath}
    Df(x)e_k=\frac{\partial f}{\partial x_k}(x):=
    \begin{bmatrix}
      \frac{\partial f_1}{\partial x_k}(x)\\
      \vdots\\
      \frac{\partial f_m}{\partial x_k}(x)
    \end{bmatrix}
  \end{displaymath}
  for $k=1,\dots,n$, giving the $k$-th column of the Jacobian matrix.
\end{remark}
\section{The rules of differentiation}
\label{sec:rules}
The proofs of the standard rules of differentiation provide a convincing
case for the simplicity of Carath\'eodory's characterisation of
derivatives. The idea is always the same: through simple algebraic
manipulations we identify a slope function and exploit its continuity at
the point at which the derivative is taken. Unlike the traditional
approach, no ``$\varepsilon$-$\delta$'' or ``little $o$'' arguments are
needed, only clean and transparent arguments involving continuity
properties of the slope function and the function itself.
\begin{proposition}[Linearity]
  Suppose that $E,F$ are real or complex Banach spaces, that
  $U\subseteq E$ is open and that $f,g\colon U\to F$ are differentiable
  at $x\in U$. If $\lambda,\mu\in\mathbb R$ (or $\mathbb C$), then
  $D(\lambda f+\mu g)(x)=\lambda Df(x)+\mu Dg(x)$.
\end{proposition}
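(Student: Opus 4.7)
The plan is to build a slope function for $h:=\lambda f+\mu g$ directly out of slope functions for $f$ and $g$, and then read off the derivative at $x$. By Definition~\ref{def:caratheodory-diff}, we have maps $\Phi_x^f,\Phi_x^g\colon U\to\mathcal L(E,F)$, continuous at $x$, such that $f(y)=f(x)+\Phi_x^f(y)(y-x)$ and $g(y)=g(x)+\Phi_x^g(y)(y-x)$ for all $y\in U$.

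The natural candidate is $\Phi_x^h(y):=\lambda\Phi_x^f(y)+\mu\Phi_x^g(y)$. The first step is to verify the Carath\'eodory identity for $h$: multiplying the two identities above by $\lambda$ and $\mu$ and adding gives $h(y)=h(x)+\Phi_x^h(y)(y-x)$, using only the fact that evaluation $B\mapsto B(y-x)$ is linear on $\mathcal L(E,F)$. The second step is continuity of $\Phi_x^h$ at $x$, which is immediate since a linear combination of maps into $\mathcal L(E,F)$ that are continuous at $x$ is again continuous at $x$ (the operator norm satisfies the triangle inequality and is positively homogeneous). Hence $\Phi_x^h$ is a slope function for $h$ at $x$, and evaluating at $y=x$ yields $D(\lambda f+\mu g)(x)=\Phi_x^h(x)=\lambda\Phi_x^f(x)+\mu\Phi_x^g(x)=\lambda Df(x)+\mu Dg(x)$.

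There is really no obstacle here; the whole point of the Carath\'eodory framework, as advertised in Section~\ref{sec:rules}, is that linearity of differentiation reduces to linearity of the slope equation and to the trivial stability of continuity under linear combinations, with no $\varepsilon$--$\delta$ or little-$o$ bookkeeping required.
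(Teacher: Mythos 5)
Your proposal is correct and follows exactly the paper's argument: form the linear combination of the two slope functions, check the Carath\'eodory identity and continuity at $x$, and evaluate at $y=x$. Nothing to add.
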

\begin{proof}
  Take slope functions $\Phi_x$ and $\Psi_x$ at $x$ for $f$ and
  $g$ respectively. Then
  \begin{displaymath}
    \lambda f(y)+\mu g(y)
    =\lambda f(x)+\mu g(x)
    +\bigl(\lambda\Phi_x(y)+\mu\Psi_x(y)\bigr)(y-x).
  \end{displaymath}
  Clearly, $\lambda\Phi_x(y)+\mu\Psi_x(y)\in\mathcal L(E,F)$ is
  continuous at $y=x$ and hence
  \begin{displaymath}
    D(\lambda f+\mu g)(x)
    =\lambda\Phi_x(x)+\mu\Psi_x(x)
    =\lambda Df(x)+\mu
    Dg(x)
  \end{displaymath}
  as claimed.
\end{proof}
We next prove the chain rule, which is a good example on how our approach
reduces questions about differentiability to questions of continuity by
identifying an appropriate slope function. Compare for instance with the
traditional proof of the chain rule in
\cite[Theorem~9.15]{rudin:76:pma}. The proof below is given in
\cite{acosta:94:fvc} for functions defined on Euclidean space, but
translates without change to real and complex Banach spaces.
\begin{theorem}[Chain rule]
  Suppose that $E,F,G$ are Banach spaces and that $U\subseteq E$ and
  $V\subseteq F$ are open sets. Assume that $g\colon U\to F$ is
  differentiable at $x\in U$ and that $g(x)\in V$. Further assume that
  $f\colon V\to G$ is differentiable at $g(x)$. Then $f\circ g$ is
  differentiable at $x$ and $D(f\circ g)(x)=Df\bigl(g(x)\bigr)Dg(x)$.
\end{theorem}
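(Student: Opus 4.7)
The plan is to build a slope function for $f\circ g$ at $x$ by composing the given slope functions of $g$ at $x$ and of $f$ at $g(x)$. Fix slope functions $\Phi_x\colon U\to\mathcal L(E,F)$ for $g$ at $x$ and $\Psi_{g(x)}\colon V\to\mathcal L(F,G)$ for $f$ at $g(x)$. Because $g$ is differentiable at $x$ it is continuous at $x$, and since $V$ is open with $g(x)\in V$, there is an open neighbourhood $U_0\subseteq U$ of $x$ with $g(U_0)\subseteq V$; on $U_0$ the composition $f\circ g$ is defined, and since differentiability at $x$ is local, it suffices to exhibit a slope function there.

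The natural candidate is
\begin{displaymath}
  \Theta_x(y):=\Psi_{g(x)}\bigl(g(y)\bigr)\,\Phi_x(y)\in\mathcal L(E,G),
  \qquad y\in U_0.
\end{displaymath}
Applying the two slope identities \eqref{eq:caratheodory-diff} successively gives
\begin{displaymath}
  \Theta_x(y)(y-x)
  =\Psi_{g(x)}\bigl(g(y)\bigr)\bigl(\Phi_x(y)(y-x)\bigr)
  =\Psi_{g(x)}\bigl(g(y)\bigr)\bigl(g(y)-g(x)\bigr)
  =f\bigl(g(y)\bigr)-f\bigl(g(x)\bigr),
\end{displaymath}
so $\Theta_x$ does satisfy the slope function equation for $f\circ g$ at $x$.

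What remains, and is the only nontrivial step, is continuity of $\Theta_x$ at $y=x$. I would argue it as a composition of three continuous pieces: $\Phi_x$ is continuous at $x$ with $\Phi_x(x)=Dg(x)$; the map $y\mapsto g(y)$ is continuous at $x$ (from the differentiability of $g$) with value $g(x)$; and $\Psi_{g(x)}$ is continuous at $g(x)$ with value $Df(g(x))$, so $y\mapsto\Psi_{g(x)}(g(y))$ is continuous at $x$. The main obstacle — really the only subtle point — is that the operator product enters $\Theta_x$, so I need joint continuity of the composition map $\mathcal L(F,G)\times\mathcal L(E,F)\to\mathcal L(E,G)$; this follows from submultiplicativity $\|AB\|\leq\|A\|\,\|B\|$ of the operator norm. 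Granting this, $\Theta_x$ is continuous at $x$ with
\begin{displaymath}
  \Theta_x(x)=\Psi_{g(x)}\bigl(g(x)\bigr)\,\Phi_x(x)=Df\bigl(g(x)\bigr)\,Dg(x),
\end{displaymath}
which by the uniqueness of the derivative proved earlier yields $D(f\circ g)(x)=Df(g(x))\,Dg(x)$. In contrast to the traditional proof, no case split on whether $g(y)-g(x)$ vanishes and no $\varepsilon$--$\delta$ estimate appears: the algebraic composition of slope functions does all the work, and the analytic content is compressed into the single remark that operator multiplication is jointly continuous at the diagonal point $(Df(g(x)),Dg(x))$.
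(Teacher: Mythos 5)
Your proposal is correct and follows essentially the same route as the paper: the candidate slope function $\Theta_x(y)=\Psi_{g(x)}(g(y))\,\Phi_x(y)$ is exactly the paper's $\Lambda(y)=\Phi(g(y))\Psi(y)$ (up to notation), verified by the same two-step algebraic substitution and the same continuity-of-compositions argument. You are in fact slightly more explicit than the paper on two minor points --- restricting to a neighbourhood $U_0$ with $g(U_0)\subseteq V$ so that $f\circ g$ is defined, and noting that the joint continuity of operator multiplication follows from $\|AB\|\leq\|A\|\,\|B\|$ --- but these are refinements of detail, not a different argument.
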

\begin{proof}
  Suppose that $\Phi\colon V\to \mathcal L(F,G)$ is a slope function of
  $f$ at $g(x)$ and that $\Psi\colon U\to\mathcal L(E,F)$ is a slope
  function of $g$ at $x$, that is,
  \begin{displaymath}
    \begin{aligned}
      f(z)&=f\bigl(g(x)\bigr)+\Phi(z)\bigl(z-g(x)\bigr)
      &&\text{for all $z\in V$}\\
      g(y)&=g(x)+\Psi(y)(y-x)&&\text{for all $y\in U$.}
    \end{aligned}
  \end{displaymath}
  In particular $f$ and $g$ are continuous at $g(x)$ and $x$,
  respectively. Using the two identities we can write
  \begin{displaymath}
    (f\circ g)(y)
    =f\bigl(g(x)\bigr)+\Phi\bigl(g(y)\bigr)\bigl(g(y)-g(x)\bigr)
    =(f\circ g)(x)+\Phi\bigl(g(y)\bigr)\Psi(y)(y-x).
  \end{displaymath}
  Hence, $y\mapsto\Lambda(y):=\Phi\bigl(g(y)\bigr)\Psi(y)$ is a slope
  function for $f\circ g$ at $x$. Using that the composition of
  continuous functions is continuous, $\Lambda$ is continuous at $y=x$
  and thus
  \begin{displaymath}
    D(f\circ g)(x)=\Lambda(x)=\Phi\bigl(g(x)\bigr)\Psi(x)
    =Df\bigl(g(x)\bigr)Dg(x)
  \end{displaymath}
  as claimed.
\end{proof}
We next prove a product rule. Products are not generally defined on
Banach spaces, but the main feature of products is that they are
bilinear. We let $E$, $F_1$ and $F_2$ be Banach spaces and
$U\subseteq E$ an open set. Let $G$ be another Banach space and assume
that $b\colon F_1\times F_2\to G$ is bounded and bilinear. Bounded means
that there exists $M>0$ such that
\begin{displaymath}
  \|b(y_1,y_2)\|_G\leq M\|y_1\|_{F_1}\|y_2\|_{F_2}
\end{displaymath}
for all $y_1\in F_1$ and $y_2\in F_2$. Given functions
$f_k\colon U\to F_k$, $k=1,2$, we consider $g\colon U\to G$ given by
\begin{displaymath}
  g(x):=b\bigl(f_1(x),f_2(x)\bigr)
\end{displaymath}
for all $x\in U$. The following proposition applies to pointwise
products of functions, the cross product, inner products and other
bilinear operations.
\begin{proposition}[Product rule]
  Let the above assumptions be satisfied and assume that $f_1,f_2$ are
  differentiable at $x\in U$. Then $g$ is differentiable with derivative
  given by
  \begin{equation}
    \label{eq:product-rule}
    Dg(x)z=b\bigl(Df_1(x)z,f_2(x))+b\bigl(f_1(x),Df_2(x)z\bigr)
  \end{equation}
  for all $z\in E$.
\end{proposition}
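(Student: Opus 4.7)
The plan is to follow the template already established for linearity and the chain rule: pick slope functions $\Phi_1,\Phi_2$ of $f_1,f_2$ at $x$, use the defining identities $f_k(y)=f_k(x)+\Phi_k(y)(y-x)$ together with the bilinearity of $b$ to massage $g(y)-g(x)$ into the form $\Lambda(y)(y-x)$ for some candidate slope function $\Lambda\colon U\to\mathcal L(E,G)$, and then check that $\Lambda$ is continuous at $y=x$ in the operator norm, which by definition will deliver $Dg(x)=\Lambda(x)$.

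Concretely, I would substitute $f_k(y)=f_k(x)+\Phi_k(y)(y-x)$ into $b(f_1(y),f_2(y))$ and expand using bilinearity; the cleanest rearrangement is to group one cross term with $g(x)$ and the other with a residual piece. This suggests setting
\begin{displaymath}
  \Lambda(y)z:=b\bigl(\Phi_1(y)z,f_2(y)\bigr)+b\bigl(f_1(x),\Phi_2(y)z\bigr),
\end{displaymath}
and I would verify by direct computation (using $\Phi_k(y)(y-x)=f_k(y)-f_k(x)$) that $\Lambda(y)(y-x)=g(y)-g(x)$, so that $\Lambda$ really is a slope function of $g$ at $x$. Boundedness of $b$ immediately gives $\Lambda(y)\in\mathcal L(E,G)$ with $\|\Lambda(y)\|\leq M(\|\Phi_1(y)\|\,\|f_2(y)\|+\|f_1(x)\|\,\|\Phi_2(y)\|)$.

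The main technical step is to verify continuity of $\Lambda$ at $y=x$ in operator norm. For this I would split
\begin{displaymath}
  \Lambda(y)z-\Lambda(x)z
  =b\bigl((\Phi_1(y)-\Phi_1(x))z,f_2(y)\bigr)
  +b\bigl(\Phi_1(x)z,f_2(y)-f_2(x)\bigr)
  +b\bigl(f_1(x),(\Phi_2(y)-\Phi_2(x))z\bigr),
\end{displaymath}
estimate each piece via the bound $\|b(y_1,y_2)\|\leq M\|y_1\|\|y_2\|$, take the supremum over $\|z\|\leq 1$, and let $y\to x$. Continuity of $\Phi_1,\Phi_2$ at $x$ and of $f_2$ at $x$ (which is built into Carath\'eodory differentiability) make each term vanish; the only subtle point is that $\|f_2(y)\|$ must stay bounded as $y\to x$, which is immediate from the continuity of $f_2$.

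Finally, evaluating at $y=x$ gives $Dg(x)z=\Lambda(x)z=b(\Phi_1(x)z,f_2(x))+b(f_1(x),\Phi_2(x)z)=b(Df_1(x)z,f_2(x))+b(f_1(x),Df_2(x)z)$, which is exactly \eqref{eq:product-rule}. The hardest part is really just choosing the asymmetric-looking $\Lambda$ (evaluating $f_2$ at $y$ in the first summand but $f_1$ at $x$ in the second) so that the telescoping works on the nose; once that is done the rest is routine bookkeeping with the triangle inequality and the bilinear bound.
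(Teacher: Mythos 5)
Your proposal is correct and follows essentially the same route as the paper: the same telescoping decomposition, the same asymmetric slope function $\Psi(y)z=b(\Phi_1(y)z,f_2(y))+b(f_1(x),\Phi_2(y)z)$, and the same conclusion by evaluating at $y=x$. The only difference is that you spell out the operator-norm continuity of $\Psi$ at $x$ via the three-term splitting, a detail the paper leaves to the reader.
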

\begin{proof}
  Let $\Phi_1,\Phi_2$ be slope functions for $f_1$ and $f_2$ at $x$,
  respectively. Then, using that $b$ is bilinear, we obtain
  \begin{displaymath}
    \begin{split}
      g(y)&=b\bigl(f_1(y),f_2(y)\bigr)
      =b\bigl(f_1(x),f_2(y)\bigr)+b\bigl(\Phi_1(y)(y-x),f_2(y)\bigr)\\
      &=b\bigl(f_1(x),f_2(x)\bigr)+b\bigl(\Phi_1(y)(y-x),f_2(y)\bigr)
      +b\bigl(f_1(x),\Phi_2(y)(y-x)\bigr)\\
      &=g(x)+\Psi(y)(y-x),
    \end{split}
  \end{displaymath}
  where we have set
  \begin{displaymath}
    \Psi(y)z
    :=b\bigl(\Phi_1(y)z,f_2(y)\bigr)+b\bigl(f_1(x),\Phi_2(y)z\bigr)
  \end{displaymath}
  for all $z\in E$. As $b$ is bounded and bilinear we deduce that
  $\Psi(y)\in\mathcal L(E,G)$ is continuous at $y=x$, implying
  \eqref{eq:product-rule} since $\Phi_k(x)=Df_k(x)$ by definition.
\end{proof}
Another common rule of differentiation is the quotient rule, but like
the usual product rule it does not directly apply in Banach
spaces. Note, however, that the quotient rule is really a composition of
a function with inversion $t\mapsto 1/t=t^{-1}$ on $\mathbb R$ or
$\mathbb C$. Hence the natural generalisation of the quotient rule is
the derivative of the map $B\mapsto B^{-1}$ on the set of bounded
invertible linear operators.  It is known that this set is open in
$\mathcal L(E)$ and that the map $B\mapsto B^{-1}$ is continuous; see
for instance \cite[Theorem~IV.1.5]{taylor:80:ifa}. Based on this fact we
show that this map is also differentiable at every invertible
$A\in\mathcal L(E)$.

\begin{theorem}[Inversion]
  Let $A\in\mathcal L(E)$ be invertible. Then the map $f$ given by
  $f(B):=B^{-1}$ is differentiable at $A$, and for $Z\in\mathcal L(E)$,
  \begin{equation}
    \label{eq:quot-rule}
    Df(A)Z=-A^{-1}ZA^{-1}.
  \end{equation}
\end{theorem}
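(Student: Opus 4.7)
My plan is to exhibit an explicit slope function for $f$ at $A$ and then invoke the continuity of inversion to conclude. The starting point is the classical algebraic identity
\begin{displaymath}
  B^{-1}-A^{-1}=B^{-1}(A-B)A^{-1}=-B^{-1}(B-A)A^{-1},
\end{displaymath}
valid for any two invertible $A,B\in\mathcal L(E)$. This rearranges to $B^{-1}=A^{-1}+\bigl(-B^{-1}(B-A)A^{-1}\bigr)$, which already has precisely the shape required by Definition~\ref{def:caratheodory-diff}, with the bracketed expression depending linearly on $B-A$.

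Let $V\subseteq\mathcal L(E)$ be the open neighbourhood of $A$ consisting of invertible operators. I would define, for each $B\in V$,
\begin{displaymath}
  \Phi_A(B)Z:=-B^{-1}ZA^{-1}\qquad\text{for all }Z\in\mathcal L(E).
\end{displaymath}
Linearity of $\Phi_A(B)$ in $Z$ is immediate, and boundedness follows from $\|\Phi_A(B)Z\|\leq\|B^{-1}\|\,\|A^{-1}\|\,\|Z\|$, so that $\Phi_A(B)\in\mathcal L(\mathcal L(E))$. The identity above gives $f(B)=f(A)+\Phi_A(B)(B-A)$ for every $B\in V$, so $\Phi_A$ is a candidate slope function.

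The remaining point is the continuity of $\Phi_A$ at $B=A$, which is where I would rely on the cited fact that $B\mapsto B^{-1}$ is continuous on the open set of invertible operators. Estimating the operator norm,
\begin{displaymath}
  \|\Phi_A(B)-\Phi_A(A)\|_{\mathcal L(\mathcal L(E))}
  =\sup_{\|Z\|\leq1}\bigl\|(A^{-1}-B^{-1})ZA^{-1}\bigr\|
  \leq\|A^{-1}-B^{-1}\|\,\|A^{-1}\|,
\end{displaymath}
which tends to $0$ as $B\to A$. Hence $\Phi_A$ is continuous at $A$, so $f$ is Carath\'eodory differentiable at $A$, and $Df(A)Z=\Phi_A(A)Z=-A^{-1}ZA^{-1}$, as claimed.

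There is no real obstacle: the only non-algebraic ingredient is the continuity of inversion, which is assumed above. The cleanness of this argument is a nice illustration of the general theme of the paper, since the traditional $\varepsilon$-$\delta$ verification of Fr\'echet differentiability for this map is replaced by a one-line norm estimate combined with continuity of $B\mapsto B^{-1}$.
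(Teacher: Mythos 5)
Your proof is correct and follows essentially the same route as the paper: exhibit a slope function from the resolvent identity and reduce everything to the continuity of $B\mapsto B^{-1}$ on the open set of invertible operators. The only (cosmetic) difference is that you use the factorisation $B^{-1}-A^{-1}=-B^{-1}(B-A)A^{-1}$, giving the slope function $\Phi_A(B)Z=-B^{-1}ZA^{-1}$, whereas the paper uses the mirror identity $B^{-1}-A^{-1}=-A^{-1}(B-A)B^{-1}$ and $\Phi(A,B)Z=-A^{-1}ZB^{-1}$; both evaluate at $B=A$ to $-A^{-1}ZA^{-1}$.
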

\begin{proof}
  If $A,B\in\mathcal L(E)$ are invertible, then
  \begin{displaymath}
    B^{-1}=A^{-1}-A^{-1}+B^{-1}
    =A^{-1}-A^{-1}(B-A)B^{-1}
    =A^{-1}+\Phi(A,B)(B-A),
  \end{displaymath}
  where $\Phi(A,B)Z:=-A^{-1}ZB^{-1}$ for all $Z\in\mathcal L(E)$.  Then
  $\Phi(A,B)\in\mathcal L\bigl(\mathcal L(E)\bigr)$ and
  \begin{displaymath}
    \|\Phi(A,B)Z-\Phi(A,A)Z\|
    =\|A^{-1}Z(A^{-1}-B^{-1})\|
    \leq\|A^{-1}\|\|A^{-1}-B^{-1}\|\|Z\|.
  \end{displaymath}
  Here, $\|{\cdot}\|$ is the norm in $\mathcal L(E)$. By definition of
  the operator norm, continuity of inversion and since the set of
  invertible operators is open,
  \begin{displaymath}
    \|\Phi(A,B)-\Phi(A,A)\|_{\mathcal L(\mathcal L(E))}
    \leq\|A^{-1}\|\|A^{-1}-B^{-1}\|\to 0
  \end{displaymath}
  as $B\to A$ in $\mathcal L(E)$. Hence $\Phi$ is a slope function for
  $f$ and $\Phi(A,B)\to\Phi(A,A)$ in $\mathcal L(\mathcal L(E))$,
  proving \eqref{eq:quot-rule}.
\end{proof}
Finally we look at functions on a product space and partial
derivatives.
\begin{proposition}[Partial derivatives]
  \label{prop:partial-derivatives}
  Let $E_1,E_2$ and $F$ be Banach spaces and let
  $U\subseteq E_1\times E_2$ be open. Assume that $f\colon U\to F$ is
  differentiable at $x=(x_1,x_2)\in U$ with slope function $\Phi$. For
  $z_1\in E_1$ we define the partial slope function $\Phi_1$ by
  \begin{equation}
    \label{eq:partial-slope-function}
    \Phi_1(x,y_1)z_1:=\Phi\bigl(x,(y_1,x_2)\bigr)(z_1,0)
  \end{equation}
  Then the function $y_1\mapsto f(y_1,x_2)$ defined on
  $U_{x_2}:=\{y_1\in E_1\colon (y_1,x_2)\in U\}$ is differentiable with
  slope function $\Phi_1(x,\cdot)\colon U_{x_2}\to\mathcal L(E_1)$ and
  derivative given by $D_1f(x_1,x_2)z_1=Df(x)(z_1,0)$ for all
  $z_1\in E_1$.
\end{proposition}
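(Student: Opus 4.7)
The plan is to directly verify that $\Phi_1(x,\cdot)$ satisfies the defining conditions of a slope function for the map $y_1\mapsto f(y_1,x_2)$. First I would substitute $y=(y_1,x_2)$ into the identity $f(y)=f(x)+\Phi(x,y)(y-x)$ from \eqref{eq:caratheodory-diff}, noting that $y-x=(y_1-x_1,0)$. This immediately yields
\begin{displaymath}
f(y_1,x_2)=f(x_1,x_2)+\Phi\bigl(x,(y_1,x_2)\bigr)(y_1-x_1,0)=f(x_1,x_2)+\Phi_1(x,y_1)(y_1-x_1),
\end{displaymath}
where the second equality is just the definition \eqref{eq:partial-slope-function} of $\Phi_1$. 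Hence the Carath\'eodory identity is satisfied by $\Phi_1(x,\cdot)$ on $U_{x_2}$.

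Next I would verify that $\Phi_1(x,y_1)\in\mathcal L(E_1,F)$ and that $\Phi_1(x,\cdot)$ is continuous at $y_1=x_1$ in the operator norm. Linearity and boundedness are automatic because $\Phi_1(x,y_1)$ is the composition of the bounded linear inclusion $z_1\mapsto(z_1,0)$ from $E_1$ into $E_1\times E_2$ with $\Phi(x,(y_1,x_2))\in\mathcal L(E_1\times E_2,F)$. For continuity I would estimate
\begin{displaymath}
\bigl\|\Phi_1(x,y_1)z_1-\Phi_1(x,x_1)z_1\bigr\|\leq\bigl\|\Phi\bigl(x,(y_1,x_2)\bigr)-\Phi(x,x)\bigr\|_{\mathcal L(E_1\times E_2,F)}\|z_1\|,
\end{displaymath}
using $\|(z_1,0)\|=\|z_1\|$ in the standard product norm. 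Taking the supremum over $\|z_1\|\leq 1$ and observing that $(y_1,x_2)\to x$ as $y_1\to x_1$, the continuity of $y\mapsto\Phi(x,y)$ at $y=x$ delivers $\|\Phi_1(x,y_1)-\Phi_1(x,x_1)\|_{\mathcal L(E_1,F)}\to 0$.

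Finally, the formula for the partial derivative is read off directly by evaluating at $y_1=x_1$: $D_1f(x_1,x_2)z_1=\Phi_1(x,x_1)z_1=\Phi(x,x)(z_1,0)=Df(x)(z_1,0)$. There is no substantial obstacle in this proof; everything amounts to bookkeeping on the product space, and the only point requiring mild care is the passage between the operator norms on $\mathcal L(E_1\times E_2,F)$ and $\mathcal L(E_1,F)$, which is handled by the boundedness of the canonical inclusion $E_1\hookrightarrow E_1\times E_2$.
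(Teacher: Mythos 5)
Your proof is correct and follows essentially the same route as the paper's: substitute $y=(y_1,x_2)$ into the Carath\'eodory identity to obtain the partial slope function, then transfer continuity of $y\mapsto\Phi(x,y)$ at $x$ to $\Phi_1(x,\cdot)$ at $x_1$. The only difference is that you spell out the operator-norm estimate via the bounded inclusion $z_1\mapsto(z_1,0)$, which the paper leaves implicit with the phrase ``from properties of $\Phi$''.
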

\begin{proof}
  By definition of a slope function and
  \eqref{eq:partial-slope-function},
  \begin{displaymath}
    f(y_1,x_2)=f(x)+\Phi\bigl(x,(y_1,x_2)\bigr)(y_1-x_1,0)
    =f(x)+\Phi_1\bigl(x,y_1\bigr)(y_1-x_1).
  \end{displaymath}
  From properties of $\Phi$ we have that $\Phi_1(x,y_1)\to\Phi_1(x,x_1)$
  in $\mathcal L(E_1,F)$ as $y_1\to x_1$. Hence, $y_1\mapsto f(y_1,x_2)$
  is differentiable at $x_1$ with slope function $\Phi_1(x,\cdot)$ at
  $x_1$.
\end{proof}

Note that the slope function of $y_1\mapsto f(y_1,x_2)$ depends on
$x_2$. For that reason we have kept $x=(x_1,x_2)$ as the first argument
of $\Phi_1$ and not just $x_1$.  As usual, we sometimes write
$D_{x_1}f(x_1,x_2)$ or $D_1f(x_1,x_2)$ for the partial derivative. In a
similar fashion we obtain the partial derivative with respect to
$x_2$. A similar approach works for products of more than two spaces.

\section{Characterisation of continuous differentiability}
\label{sec:continuous-diff}
Assume that $U\subseteq E$ is open and that $f\colon U\to F$ is
differentiable. For every slope function $\Phi(x,y)$ we require the
continuity of $y\mapsto\Phi(x,y)$ at $x$ by definition. We do not say
anything about continuity as a function of $x$, let alone joint
continuity as a function of $(x,y)$. It turns out that continuous
differentiability can be characterised by means of such a joint
continuity property. Such a characterisation appears in
\cite[Section~5]{acosta:94:fvc}, but apart from a generalisation to the
Banach space case we also provide details on how exactly the mean value
theorem is used.
\begin{theorem}
  \label{thm:continuous-diff}
  Suppose that $E,F$ are Banach spaces, that $U\subseteq E$ is open and
  that $f\colon U\to F$ is differentiable. Then $Df$ is continuous at
  $x_0$ if and only if there exists a slope function
  $\Phi(\cdot\,,\cdot)$ for $f$ that is (jointly) continuous at
  $(x_0,x_0)$ as a function of $(x,y)$. In that case, the canonical
  slope function given by \eqref{eq:canonical-slope-function} is jointly
  continuous at $(x_0,x_0)$.
\end{theorem}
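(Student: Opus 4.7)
The plan is to prove both directions, handling the easy one first and then attacking the converse with the canonical slope function together with the mean value inequality.

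For the forward implication, I would simply observe that any slope function satisfies $Df(x)=\Phi(x,x)$ by the uniqueness proposition proved earlier. Hence, if $\Phi$ is jointly continuous at $(x_0,x_0)$, then as $x\to x_0$ we have $(x,x)\to(x_0,x_0)$, so $Df(x)=\Phi(x,x)\to\Phi(x_0,x_0)=Df(x_0)$ in $\mathcal L(E,F)$. That takes care of one direction and also justifies why it suffices to verify the statement for the canonical slope function in the other direction.

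For the converse, assume $Df$ is continuous at $x_0$ and work with the canonical slope function \eqref{eq:canonical-slope-function}. Since $\|\ell(x,y)\|_{E'}=1$ (or at worst uniformly bounded), the triangle inequality applied to \eqref{eq:canonical-slope-function} gives, for $x\neq y$,
\begin{displaymath}
  \|\Phi(x,y)-Df(x_0)\|_{\mathcal L(E,F)}
  \leq\frac{\|f(y)-f(x)-Df(x)(y-x)\|}{\|y-x\|}
  +\|Df(x)-Df(x_0)\|_{\mathcal L(E,F)},
\end{displaymath}
and for $x=y$ the left-hand side reduces to $\|Df(x)-Df(x_0)\|$. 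The second term tends to $0$ as $x\to x_0$ by hypothesis, so the real work is to bound the first term uniformly in $y$.

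To handle that term I would apply the mean value inequality (the one advertised at the start of this section) to the auxiliary map $g(t):=f(x+t(y-x))-tDf(x)(y-x)$, whose ``derivative'' in $t$ is $(Df(x+t(y-x))-Df(x))(y-x)$. Choosing $r>0$ with $B(x_0,r)\subseteq U$ and restricting to $x,y\in B(x_0,r/2)$ so the segment $[x,y]$ lies in $U$, the mean value inequality yields
\begin{displaymath}
  \frac{\|f(y)-f(x)-Df(x)(y-x)\|}{\|y-x\|}
  \leq\sup_{t\in[0,1]}\|Df(x+t(y-x))-Df(x)\|_{\mathcal L(E,F)}.
\end{displaymath}
Given $\varepsilon>0$, continuity of $Df$ at $x_0$ furnishes $\delta>0$ with $\|Df(u)-Df(x_0)\|<\varepsilon$ whenever $\|u-x_0\|<\delta$. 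For $(x,y)$ in the product ball $B(x_0,\delta/2)\times B(x_0,\delta/2)$, convexity gives $\|x+t(y-x)-x_0\|<\delta$ for all $t\in[0,1]$, and the triangle inequality bounds the supremum by $2\varepsilon$. Combining this with the control on $\|Df(x)-Df(x_0)\|$ yields $\Phi(x,y)\to Df(x_0)=\Phi(x_0,x_0)$ as $(x,y)\to(x_0,x_0)$, proving joint continuity of the canonical slope function at the diagonal point.

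The main obstacle is the bound on the ``remainder'' quotient: without the mean value inequality one only knows for \emph{fixed} $x$ that this quotient tends to $0$ as $y\to x$, whereas here both arguments move. The mean value inequality, combined with the \emph{uniform} estimate $\|x+t(y-x)-x_0\|<\delta$ obtained from convexity of the ball, is exactly what converts pointwise Fréchet differentiability into the joint control needed at $(x_0,x_0)$.
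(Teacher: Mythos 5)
Your proposal is correct and follows essentially the same route as the paper: the forward direction via $Df(x)=\Phi(x,x)$, and the converse via the canonical slope function, the triangle inequality with $\|\ell(x,y)\|_{E'}=1$, and the mean value inequality applied with $A=Df(x)$ on a ball around $x_0$. The only cosmetic difference is that the paper packages the two estimates into Corollary~\ref{cor:mvt-ineq} and concludes by letting $c_{x,y}\to x_0$, whereas you take a supremum over the segment and run an explicit $\varepsilon$-$\delta$ argument.
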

It is tempting to believe that every slope function has the above joint
continuity property if $Df$ is continuous at $x_0$. However, as we show
in Section~\ref{sec:discussion}, one can always construct a slope
function that is not even separately continuous. This is not bad because
in practice we only need to know that a jointly continuous slope
function exists. Note also that we make no claim on the continuity of
$\Phi$ at points other than $(x_0,x_0)$.

The main tool to prove the above theorem is a mean value inequality. To
simplify the statement we denote the line segment connecting $x$ and $y$
in $E$ by
\begin{displaymath}
  \lBrack x,y\rBrack:=\bigl\{x+t(y-x)\colon t\in[0,1]\bigr\}.
\end{displaymath}
The idea is taken from \cite[Theorem~5.19]{rudin:76:pma}, but instead of
inner products in $\mathbb R^n$ we use duality in Banach spaces. We also
deal with the case of complex Banach spaces.
\begin{theorem}[Mean value inequality]
  \label{thm:mvt-ineq}
  Assume that $E,F$ are Banach spaces, that $U\subseteq E$ is open and
  that $f\colon U\to F$ is differentiable. Let $A\in\mathcal L(E,F)$ and
  let $x,y\in U$ be distinct points such that
  $\lBrack x,y\rBrack\subseteq U$. Then there exists
  $c\in\lBrack x,y\rBrack$, $c\neq x,y$, such that
  \begin{equation}
    \label{eq:mvt-ineq}
    \|f(y)-f(x)-A(y-x)\|
    \leq\|Df(c)(y-x)-A(y-x)\|.
  \end{equation}
\end{theorem}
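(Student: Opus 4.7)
The plan is to reduce the Banach space mean value inequality to the classical one-dimensional mean value theorem, using duality in $F$ to convert a vector-valued statement into a real-valued statement, and parametrisation of the segment to convert a Banach space statement into a one-variable statement.

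First I would parametrise the segment by $\gamma(t) := x + t(y-x)$ for $t \in [0,1]$, and introduce the auxiliary function $g \colon [0,1] \to F$ defined by
\begin{displaymath}
  g(t) := f\bigl(\gamma(t)\bigr) - tA(y-x).
\end{displaymath}
Since $\lBrack x,y\rBrack$ is compact and contained in the open set $U$, the map $\gamma$ extends to an open interval containing $[0,1]$ with values in $U$, so the chain rule (already established in Section~\ref{sec:rules}) gives
\begin{displaymath}
  g'(t) = Df\bigl(\gamma(t)\bigr)(y-x) - A(y-x)
\end{displaymath}
for all $t\in(0,1)$. Note that $g(1)-g(0) = f(y) - f(x) - A(y-x)$, which is exactly the vector whose norm appears on the left of \eqref{eq:mvt-ineq}.

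Next I would use the Hahn--Banach theorem to pick $\varphi\in F'$ with $\|\varphi\|_{F'}=1$ and $\langle\varphi,g(1)-g(0)\rangle=\|g(1)-g(0)\|$, and pass to the real-valued auxiliary function
\begin{displaymath}
  h(t):=\repart\langle\varphi,g(t)\rangle,\qquad t\in[0,1].
\end{displaymath}
The function $h$ is continuous on $[0,1]$ and differentiable on $(0,1)$, with $h'(t) = \repart\langle\varphi,g'(t)\rangle$. The classical mean value theorem on $\mathbb R$ then yields $\theta\in(0,1)$ such that $h(1)-h(0)=h'(\theta)$. Taking real parts is what allows the argument to work uniformly for real and complex Banach spaces; if $F$ is real the $\repart$ is redundant.

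Finally, setting $c:=\gamma(\theta)$ (which lies strictly between $x$ and $y$), the choice of $\varphi$ forces $\langle\varphi,g(1)-g(0)\rangle$ to be real and equal to $\|g(1)-g(0)\|$, so $h(1)-h(0)=\|g(1)-g(0)\|$. Combining this with the identity $h(1)-h(0)=h'(\theta)$ and the elementary estimate
\begin{displaymath}
  \repart\langle\varphi,w\rangle\leq|\langle\varphi,w\rangle|\leq\|\varphi\|_{F'}\,\|w\|=\|w\|
  \qquad(w\in F)
\end{displaymath}
applied with $w=Df(c)(y-x)-A(y-x)$, I obtain the desired inequality \eqref{eq:mvt-ineq}. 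The main obstacle is the bookkeeping in the complex case: the functional $\varphi$ is only complex linear, so one must work with its real part on $[0,1]$ rather than with $\langle\varphi,g\rangle$ directly, and one must verify that the strict inequality $\theta\in(0,1)$ (not merely $\theta\in[0,1]$) indeed gives $c\neq x,y$.
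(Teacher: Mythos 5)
Your proposal is correct and follows essentially the same route as the paper: Hahn--Banach to pick a norming functional $\varphi$, reduction to a real-valued function on $[0,1]$, and the classical mean value theorem. The only cosmetic difference is that the paper handles the complex case via $H(t)=\repart\bigl(\overline{g(1)}g(t)\bigr)$ (following Rudin), whereas you take $\repart\langle\varphi,g(t)\rangle$ directly and use that the choice of $\varphi$ already makes $\langle\varphi,g(1)-g(0)\rangle$ real and equal to the norm; the two devices coincide here up to a nonnegative scalar factor.
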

\begin{proof}
  By the Hahn-Banach theorem there exists $\varphi\in F'$ with
  $\|\varphi\|_{F'}=1$ such that
  \begin{equation}
    \label{eq:hb}
    \langle\varphi,f(y)-f(x)-A(y-x)\rangle
    =\|f(y)-f(x)-A(y-x)\|_F;
  \end{equation}
  see \cite[Corollary~1.3]{brezis:11:fa}.  We next define the function
  $g\colon [0,1]\to\mathbb C$ by
  \begin{equation}
    \label{eq:hb-g}
    g(t)
    :=\bigl\langle\varphi,f\bigl(x+t(y-x)\bigr)-f(x)-tA(y-x)\bigr\rangle.
  \end{equation}
  It is well defined since $\lBrack x,y\rBrack\subseteq U$ by
  assumption.  It is real valued if $E,F$ are real Banach spaces. To
  allow for complex Banach spaces we define the function
  $H\colon [0,1]\to\mathbb R$ by
  \begin{displaymath}
    H(t)
    =\repart\bigl(\overline{g(1)}g(t)\bigr)
  \end{displaymath}
  for all $t\in[0,1]$. We note that a complex valued function of
  $t\in\mathbb R$ is differentiable if and only if its real and
  imaginary parts are differentiable. As $g(0)=0$ and hence $H(0)=0$, by
  the classical mean value theorem there exists $t_0\in(0,1)$ such that
  \begin{displaymath}
    |g(1)|^2
    =H(1)-H(0)
    =H'(t_0)
    =\repart\bigl(\overline{g(1)}g'(t_0)\bigr)
    \leq |g(1)||g'(t_0)|.
  \end{displaymath}
  Hence,
  \begin{math}
    |g(1)|\leq|g'(t_0)|.
  \end{math}
  Using \eqref{eq:hb}, \eqref{eq:hb-g} and the chain rule we deduce that
  \begin{displaymath}
    \begin{split}
      \|f(y)-f(x)-A(y-x)\|
      &=|g(1)|\leq |g'(t_0)|\\
      &=\bigl|\bigl\langle\varphi,Df(x+t_0(y-x))(y-x)
      -A(y-x)\bigr\rangle\bigr|\\
      &\leq\|Df(x+t_0(y-x))(y-x)-A(y-x)\|.
    \end{split}
  \end{displaymath}
  In the last step we used that $\|\varphi\|_{F'}=1$. To complete the
  proof of \eqref{eq:mvt-ineq} we finally set $c:=x+t_0(y-x)$. Clearly
  $c\in\lBrack x,y\rBrack$, $c\neq x,y$, since $t_0\in(0,1)$.
\end{proof}
From the above mean value inequality we can derive an inequality
involving the special slope function \eqref{eq:canonical-slope-function}
\begin{corollary}
  \label{cor:mvt-ineq}
  Suppose that the assumptions of Theorem~\ref{thm:mvt-ineq} are
  satisfied, and that $\Phi(x,y)$ is a slope function of $f$ of the form
  \eqref{eq:canonical-slope-function}. If $x,y\in U$ are distinct points
  such that $\lBrack x,y\rBrack\subseteq U$, then there exists
  $c\in\lBrack x,y\rBrack$, $c\neq x,y$, such that
  \begin{equation}
    \label{eq:mvt-ineq-general}
    \|\Phi(x,y)-A\|_{\mathcal L(E,F)}
    \leq\|Df(c)-Df(x)\|_{\mathcal L(E,F)}+\|Df(x)-A\|_{\mathcal L(E,F)}.
  \end{equation}
\end{corollary}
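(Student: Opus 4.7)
The plan is to combine the explicit formula \eqref{eq:canonical-slope-function} for the canonical slope function with the mean value inequality from Theorem~\ref{thm:mvt-ineq}, applied with $Df(x)$ in place of $A$. The key observation is that $\Phi(x,y)-A$ splits naturally as $[\Phi(x,y)-Df(x)] + [Df(x)-A]$, and the first bracket is essentially controlled by the Fr\'echet remainder $f(y)-f(x)-Df(x)(y-x)$, which is exactly what the mean value inequality estimates.

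First I would unpack the canonical slope function. For $x\neq y$ and any $z\in E$, formula \eqref{eq:canonical-slope-function} gives
\begin{displaymath}
  \Phi(x,y)z - Df(x)z
  =\frac{f(y)-f(x)-Df(x)(y-x)}{\|y-x\|}\langle\ell(x,y),z\rangle.
\end{displaymath}
Since $\|\ell(x,y)\|_{E'}=1$ by construction, we have $|\langle\ell(x,y),z\rangle|\leq\|z\|$, and therefore
\begin{displaymath}
  \|\Phi(x,y)-Df(x)\|_{\mathcal L(E,F)}
  \leq\frac{\|f(y)-f(x)-Df(x)(y-x)\|}{\|y-x\|}.
\end{displaymath}

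Next I would invoke Theorem~\ref{thm:mvt-ineq} with $A$ replaced by $Df(x)$: since $\lBrack x,y\rBrack\subseteq U$ and $x\neq y$, there exists $c\in\lBrack x,y\rBrack$ with $c\neq x,y$ such that
\begin{displaymath}
  \|f(y)-f(x)-Df(x)(y-x)\|
  \leq\|Df(c)(y-x)-Df(x)(y-x)\|
  \leq\|Df(c)-Df(x)\|_{\mathcal L(E,F)}\|y-x\|,
\end{displaymath}
where the last step is the definition of the operator norm. Dividing through by $\|y-x\|$ and combining with the previous estimate yields
\begin{displaymath}
  \|\Phi(x,y)-Df(x)\|_{\mathcal L(E,F)}
  \leq\|Df(c)-Df(x)\|_{\mathcal L(E,F)}.
\end{displaymath}

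Finally, I would add and subtract $Df(x)$ inside the norm on the left-hand side of \eqref{eq:mvt-ineq-general} and apply the triangle inequality to obtain
\begin{displaymath}
  \|\Phi(x,y)-A\|_{\mathcal L(E,F)}
  \leq\|\Phi(x,y)-Df(x)\|_{\mathcal L(E,F)}+\|Df(x)-A\|_{\mathcal L(E,F)},
\end{displaymath}
which together with the previous bound gives \eqref{eq:mvt-ineq-general}. There is no real obstacle here; the only point requiring care is remembering that the freedom to choose $A$ in Theorem~\ref{thm:mvt-ineq} lets us pick $A=Df(x)$, which is what produces the term $\|Df(c)-Df(x)\|$ and thereby the shape of the right-hand side of \eqref{eq:mvt-ineq-general}.
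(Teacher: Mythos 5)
Your proof is correct and follows essentially the same route as the paper: bound $\|\Phi(x,y)-Df(x)\|_{\mathcal L(E,F)}$ by the normalised Fr\'echet remainder using $\|\ell(x,y)\|_{E'}=1$, control that remainder via Theorem~\ref{thm:mvt-ineq} applied with $A$ replaced by $Df(x)$, and finish with the triangle inequality. The only cosmetic difference is that the paper performs the splitting into $\Phi(x,y)-Df(x)$ and $Df(x)-A$ at the level of the pointwise estimate in $z$ before passing to operator norms, whereas you apply the triangle inequality at the operator-norm level at the end.
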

\begin{proof}
  We start by noting that, for all $z\in E$,
  \begin{align*}
    \|\Phi(x,y)z&-Az\|
      =\Bigl\|\frac{f(y)-f(x)-Df(x)(y-x)}{\|y-x\|}\langle\ell(x,y),z\rangle
      +Df(x)z-Az\Bigr\|\\
    &\leq\Bigl\|\frac{f(y)-f(x)-Df(x)(y-x)}{\|y-x\|}\Bigr\|
      \|\ell(x,y)\|_{E'}\|z\|
      +\|Df(x)-A\|_{\mathcal L(E,F)}\|z\|\\
    &=\Bigl\|\frac{f(y)-f(x)-Df(x)(y-x)}{\|y-x\|}\Bigr\|\|z\|
      +\|Df(x)-A\|_{\mathcal L(E,F)}\|z\|,
  \end{align*}
  where we used that $\|\ell(x,y)\|_{E'}=1$. Hence by definition of the
  operator norm,
  \begin{displaymath}
    \|\Phi(x,y)-A\|_{\mathcal L(E,F)}
    \leq\Bigl\|\frac{f(y)-f(x)-Df(x)(y-x)}{\|y-x\|}\Bigr\|
    +\|Df(x)-A\|_{\mathcal L(E,F)}.
  \end{displaymath}
  Applying Theorem~\ref{thm:mvt-ineq}, there exists
  $c\in\lBrack x,y\rBrack$ with
  \begin{multline*}
    \Bigl\|\frac{f(y)-f(x)-Df(x)(y-x)}{\|y-x\|}\Bigr\|
    \leq\frac{1}{\|y-x\|}\bigl\|Df(c)(y-x)-Df(x)(y-x)\bigr\|\\
    \leq\bigl\|Df(c)-Df(x)\bigr\|_{\mathcal L(E,F)}
    \frac{\|y-x\|}{\|y-x\|}
    =\bigl\|Df(c)-Df(x)\bigr\|_{\mathcal L(E,F)}.
  \end{multline*}
  Combining the above, \eqref{eq:mvt-ineq-general} follows.
\end{proof}
\begin{remark}
  As seen from the proof of Theorem~\ref{thm:mvt-ineq} and
  Corollary~\ref{cor:mvt-ineq}, it is sufficient to assume that $f$ be
  continuous at the endpoints of $\lBrack x,y\rBrack$ and differentiable
  inside.
\end{remark}

Now we are in a position to prove Theorem~\ref{thm:continuous-diff}.
\begin{proof}[Proof of Theorem~\ref{thm:continuous-diff}]
  First assume that there exists a slope function $\Phi$ that is
  continuous at $(x_0,x_0)$. Then in particular the function
  $x\mapsto\Phi(x,x)=Df(x)$ is continuous at $x_0$, that is, $Df$ is
  continuous at $x_0$.

  Assume now that $Df$ is continuous at $x_0$. We choose the slope
  function $\Phi(x,y)$ of $f$ given by
  \eqref{eq:canonical-slope-function}.  As $U$ is open we can find $r>0$
  such that $B(x_0,r)\subseteq U$. If we fix $x,y\in B(x_0,r)$, then,
  applying \eqref{eq:mvt-ineq-general} with $A=Df(x_0)$, there exists
  $c_{x,y}\in\lBrack x,y\rBrack$ with
  \begin{equation}
    \label{eq:diag-estimate}
    \|\Phi(x,y)-Df(x_0)\|_{\mathcal L(E,F)}
    \leq\bigl\|Df(c_{x,y})-Df(x)\bigr\|_{\mathcal L(E,F)}
    +\bigl\|Df(x)-Df(x_0)\|_{\mathcal L(E,F)}.
  \end{equation}
  As $c_{x,y}$ is a convex combination of $x$ and $y$, it follows that
  $c_{x,y}\in B(x_0,r)$ and that $c_{x,y}\to x_0$ as
  $(x,y)\to (x_0,x_0)$.  By the continuity of $Df$ at $x_0$, we deduce
  from \eqref{eq:diag-estimate} that
  \begin{displaymath}
    \lim_{(x,y)\to (x_0,x_0)}\bigl\|\Phi(x,y)-Df(x_0)\|_{\mathcal
      L(E,F)}=0,
  \end{displaymath}
  proving the joint continuity of $\Phi$ at $(x_0,x_0)$.
\end{proof}

\section{The symmetry of second order derivatives}
\label{sec:symmetry}
If $f\colon U\to E$ is differentiable, then it makes sense to consider
the second order derivative. As $Df\colon U\to\mathcal L(E,F)$, the
second order derivative $D^2f(x)$ is a linear operator from $E$ into
$\mathcal L(E,F)$, that is,
$D^2f(x)\in\mathcal L\bigl(E,\mathcal L(E,F)\bigr)$. As commonly done,
we identify $\mathcal L\bigl(E,\mathcal L(E,F)\bigr)$ with the space
$\mathcal L^2(E\times E;F)$ of bounded bilinear maps from $E\times E$ to
$F$; see for instance \cite[Theorem~4.3]{amann:08:a2}. With that
identification $D^2f(x)\in\mathcal L^2(E\times E;F)$. We use the theory
developed so far to provide a simple proof of the well known fact that
$D^2f(x)$ is symmetric, named after Schwarz, Young or Clairaut depending
on local tradition. Most references provide a proof if the second order
derivative is continuous. We only assume that it exists at one point.

\begin{theorem}[Symmetry of second order derivatives]
\label{thm:schwarz}
Assume that $f\colon U\to F$ is such that $D^2f(x)$ exists at the point
$x\in U$. Then $D^2f(x)$ is symmetric, that is,
$D^2f(x)[u,v]=D^2f(x)[v,u]$ for all $u,v\in E$.
\end{theorem}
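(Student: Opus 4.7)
The plan is to reduce the symmetry to a diagonal limit of the second symmetric difference
\[
\Delta(s,t) := f(x+su+tv) - f(x+su) - f(x+tv) + f(x),
\]
which is manifestly unchanged when we interchange $(s,u)$ with $(t,v)$; in particular $\Delta(s,s)$ is symmetric in $u$ and $v$. I will show that $\Delta(s,s)/s^2 \to D^2f(x)[v,u]$ as $s \to 0^+$, and running the same argument with the roles of $u$ and $v$ swapped then produces $D^2f(x)[u,v]$ as the limit, delivering the symmetry.

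The existence of $D^2f(x)$ furnishes, on some neighbourhood $V \subseteq U$ of $x$, a slope function $\Psi$ for $Df$ at $x$, continuous at $x$, with $\Psi(x) = D^2f(x)$ and $Df(y) = Df(x) + \Psi(y)(y-x)$ for $y \in V$. For fixed $u,v \in E$ and $s,t > 0$ small enough that the rectangle $\{x + \sigma u + \tau v : \sigma \in [0,s],\ \tau \in [0,t]\}$ lies in $V$, I would introduce the curve
\[
\phi(\tau) := f(x+\tau u+tv) - f(x+\tau u), \qquad \tau \in [0,s],
\]
so that $\phi(s) - \phi(0) = \Delta(s,t)$ and $\phi'(\tau) = \bigl[Df(x+\tau u+tv) - Df(x+\tau u)\bigr]u$ by the chain rule. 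Substituting the slope function identity and expanding, this becomes
\[
\phi'(\tau) = \bigl[\Psi(x+\tau u+tv)(tv)\bigr]u + \bigl[\bigl(\Psi(x+\tau u+tv) - \Psi(x+\tau u)\bigr)(\tau u)\bigr]u,
\]
in which each $\Psi$ is close to $\Psi(x) = D^2f(x)$. Quantifying this via the continuity of $\Psi$ at $x$: given $\varepsilon > 0$, shrink $s,t$ so that $\|\Psi(y) - \Psi(x)\|_{\mathcal L(E,\mathcal L(E,F))} < \varepsilon$ on the rectangle; using $[\Psi(x)v]u = D^2f(x)[v,u]$, routine operator-norm estimates then give
\[
\|\phi'(\tau) - t\,D^2f(x)[v,u]\|_F \leq \varepsilon\,t\,\|u\|\,\|v\| + 2\varepsilon\,s\,\|u\|^2
\]
uniformly for $\tau \in [0,s]$.

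Next I would apply the mean value inequality (Theorem~\ref{thm:mvt-ineq}) to $\phi \colon [0,s] \to F$ with the linear comparator $A \in \mathcal L(\mathbb R, F)$ defined by $A(\tau) := \tau\, t\, D^2f(x)[v,u]$: for some $c \in (0,s)$,
\[
\|\Delta(s,t) - s t\,D^2f(x)[v,u]\|_F \leq s\,\|\phi'(c) - t\,D^2f(x)[v,u]\|_F \leq s\bigl(\varepsilon t\|u\|\|v\| + 2\varepsilon s\|u\|^2\bigr).
\]
Specialising to $s = t$ and dividing by $s^2$ yields
\[
\left\|\frac{\Delta(s,s)}{s^2} - D^2f(x)[v,u]\right\|_F \leq \varepsilon\bigl(\|u\|\|v\| + 2\|u\|^2\bigr),
\]
so $\Delta(s,s)/s^2 \to D^2f(x)[v,u]$ as $s \to 0^+$, and the symmetry of $\Delta(s,s)$ in $u,v$ finishes the proof.

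The main obstacle I anticipate is the cross term $2\varepsilon s\|u\|^2$ in the bound for $\phi'$: multiplied by $s$ in the mean value inequality it contributes $\varepsilon s^2\|u\|^2$ to the bound on $\Delta(s,t)$, which would dominate $\varepsilon st\|u\|\|v\|$ if $s \gg t$, so an independent double limit $(s,t) \to (0,0)$ would not immediately yield a clean bilinear limit. Restricting to the diagonal $s = t$ (which is all the symmetry statement requires, in view of the invariance of $\Delta(s,s)$ under $u \leftrightarrow v$) converts this offending term into $\varepsilon\|u\|^2$, harmlessly vanishing with $\varepsilon$.
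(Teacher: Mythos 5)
Your proposal is correct and follows essentially the same route as the paper's proof: the same second symmetric difference, realised as the increment of a one-parameter function, estimated via the mean value inequality of Theorem~\ref{thm:mvt-ineq} with the comparator $A$, expanded through a slope function of $Df$ at $x$, and concluded from the $u\leftrightarrow v$ symmetry of the difference on the diagonal $s=t$. The only cosmetic differences are that you parametrise along $u$ rather than $v$ (hence land on $D^2f(x)[v,u]$ first) and make the continuity of the slope function quantitative with an $\varepsilon$ uniform over the rectangle, where the paper simply passes to the limit $s\to 0^+$.
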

\begin{proof}
  We first note that for $D^2f(x)$ to exist, $f$ needs to be
  differentiable in a neighbourhood of $x$. We fix $u,v\in E$. As $f$ is
  differentiable in a neighbourhood of $x$, for fixed $s>0$ small
  enough, the function $g\colon[0,s]\to F$ given by
  \begin{equation}
    \label{eq:f-difference}
    g(t):=f(x+su+tv)-f(x+tv)
  \end{equation}
  is well defined and differentiable. Thus the mean value inequality
  from Theorem~\ref{thm:mvt-ineq} implies the existence of
  $\theta\in(0,1)$ such that
  \begin{equation}
    \label{eq:sd-diff}
    \|g(s)-g(0)-s^2D^2f(x)[u,v]\|
    \leq\|g'(\theta s)s-s^2D^2f(x)[u,v]\|,
  \end{equation}
  where we have set $At:=tsD^2f(x)[u,v]$ for the linear map
  $A\colon\mathbb R\to F$.  As $Df$ is differentiable at $x$ there
  exists a slope function $\Phi\colon U\to\mathcal L^2(E\times E;F)$ for
  $Df$ at $x$. Using the chain rule to compute $g'$ we see that
  \begin{displaymath}
    \begin{split}
      g'(\theta s)
      &=Df(x+su+\theta sv)v-Df(x+\theta sv)v\\
      &=\bigl(Df(x+su+\theta sv)-Df(x)\bigr)v
      -\bigl(Df(x+\theta sv)-Df(x)\bigr)v\\
      &=\Phi(x+su+\theta sv)[su+\theta sv,v]
      -\Phi(x+\theta sv)[\theta sv,v]\\
      &=s\bigl(\Phi(x+su+\theta sv)-\Phi(x+\theta sv)\bigr)[\theta v,v]
      +s\Phi(x+su+\theta sv)[u,v].
    \end{split}
  \end{displaymath}
  Combining the above identity with \eqref{eq:sd-diff} and using that
  $\theta\in(0,1)$, we arrive at
  \begin{equation}
    \label{eq:mvt-est}
    \begin{split}
      \Bigl\|\frac{g(s)-g(0)}{s^2}&-D^2f(x)[u,v]\Bigr\|_F
      \leq\Bigl\|\frac{g'(\theta s)}{s}-D^2f(x)[u,v]\Bigr\|_F\\
      &\leq\bigl\|\Phi(x+su+\theta sv)-\Phi(x+\theta sv)\bigr\|_{\mathcal
        L^2(E\times E;F)}\|v\|_E^2\\
      &\qquad+\bigl\|\Phi(x+su+\theta sv)-D^2f(x)\bigr\|_{\mathcal
        L^2(E\times E;F)}\|u\|_E\|v\|_E.
    \end{split}
  \end{equation}
  By definition of differentiability, $\Phi$ is continuous at $x$ and
  hence
  \begin{displaymath}
    \lim_{s\to 0+}\Phi(x+su+\theta sv)
    =\lim_{s\to 0+}\Phi(x+\theta sv)
    =D^2f(x)
  \end{displaymath}
  in $\mathcal L^2(E\times E;F)$. Hence, the right hand side of
  \eqref{eq:mvt-est} goes to zero as $s\to 0+$, that is,
  \begin{displaymath}
    \lim_{s\to 0+}\frac{g(s)-g(0)}{s^2}=D^2f(x)[u,v].
  \end{displaymath}
  Looking at the definition of $g$ given in \eqref{eq:f-difference} we
  see that $g(s)-g(0)$ is symmetric as a function of $(u,v)$, so by
  interchanging the roles of $u$ and $v$ we also have
  \begin{displaymath}
    \lim_{s\to 0+}\frac{g(s)-g(0)}{s^2}=D^2f(x)[v,u],
  \end{displaymath}
  proving that $D^2f(x)[u,v]=D^2f(x)[v,u]$.
\end{proof}
\begin{remark}
  By an induction argument, the above theorem implies the symmetry of
  all higher order derivatives. The induction argument used in
  \cite[Corollary~VII.4.7]{amann:08:a2} can be adapted for that
  purpose. In the case of a function $f\colon\mathbb R^n\to\mathbb R$,
  symmetry means that the Hessian matrix is symmetric, and more
  generally that partial derivatives can be taken in any order to yield
  the same result.
\end{remark}

\section{Further discussion of slope functions}
\label{sec:discussion}
In this section we provide a further discussion of slope functions. In
particular we discuss joint and separate continuity, symmetry, and
derivatives of Lipschitz functions.

\paragraph{Joint and separate continuity.}
If $g\colon\mathbb R\to\mathbb R$ is differentiable, then the slope
function is uniquely determined and given by
\begin{equation}
  \label{eq:slope-1v}
  \varphi(s,t):=
  \begin{cases}
    \dfrac{g(t)-g(s)}{t-s}&\text{if $t\neq s$,}\\
    g'(s)&\text{if $t=s$.}
  \end{cases}
\end{equation}
Clearly $\varphi(s,t)=\varphi(t,s)$ and hence $\varphi$ is separately
continuous at $(s,s)$, that is, $t\mapsto \varphi(s,t)$ is continuous at
$s$ and $t\mapsto\varphi(t,s)$ is continuous at $s$. We show that this
is not necessarily the case for functions of two or more variables.
\begin{example}
  \label{ex:sep-contl-1}
  For $s\in\mathbb R$ define $g(s):=s^2\cos(1/s)$ if $s\neq 0$ and
  $g(0):=0$.  We can define a function of two variables by setting
  \begin{displaymath}
    f(x):=g(x_1)
  \end{displaymath}
  for all $x=(x_1,x_2)\in\mathbb R^2$.  If $x_1=0$, then the canonical
  slope function \eqref{eq:canonical-slope-function} is the $1\times 2$
  matrix given by
  \begin{displaymath}
    \Phi(x,y)=\frac{f(y)}{\|y-x\|^2}
    \begin{bmatrix}
      y_1&y_2-x_2
    \end{bmatrix}.
  \end{displaymath}
  for all $y\neq x$.  If $x_1\neq 0$, then it is given by
  \begin{displaymath}
    \begin{split}
      \Phi(x,y)&=\frac{f(y)-f(x)-Df(x)(y-x)}{\|y-x\|^2}
      \begin{bmatrix}
        y_1-x_1&y_2-x_2
      \end{bmatrix}
      +Df(x)\\
      &=\frac{y_1-x_1}{\|y-x\|^2}\bigl(\varphi(x_1,y_1)-g'(x_1)\bigr)
      \begin{bmatrix}
        y_1-x_1&y_2-x_2
      \end{bmatrix}
      +
      \begin{bmatrix}
        g'(x_1)&0
      \end{bmatrix},
    \end{split}
  \end{displaymath}
  where $\varphi$ is the slope function of $g$ given by
  \eqref{eq:slope-1v}.  Obviously $\Phi(0,x)\neq\Phi(x,0)$ which is not
  surprising given the geometric interpretation of slope functions from
  Section~\ref{sec:equivalence}. What is more interesting is that $\Phi$
  is not separately continuous at $(0,0)$. In particular,
  $\lim_{x\to 0}\Phi(x,0)$ does not exist. Indeed, since
  $|g'(x_1)|\leq 2$ for $|x_1|\leq 1$ and $\varphi(x_1,0)\to g'(0)=0$ as
  $x_1\to 0$ it follows that
  \begin{displaymath}
    \lim_{x\to 0}\frac{x_1}{\|x\|^2}\bigl(\varphi(x_1,0)-g'(x_1)\bigr)
      \begin{bmatrix}
        x_1&x_2
      \end{bmatrix}
      =0.
  \end{displaymath}
  However, the second term $[g'(x_1)\;0]$ does not converge as
  $x_1\to 0$.

  The above example also shows that the canonical slope function is not
  always the best one to use. Here, there is a much simpler one with
  much better properties, namely
  \begin{displaymath}
    \Psi(x,y):=
    \begin{bmatrix}
      \varphi(x_1,y_1)&0
    \end{bmatrix}.
  \end{displaymath}
  Inheriting the properties of $\varphi$, it follows that $\Psi$ is
  separately continuous and symmetric.
\end{example}
At every point $(0,x_2)$, the function $f$ in the above example is not
continuously differentiable. We now show that separate continuity of the
slope function can fail regardless of how smooth the function is. This
makes it clear that Theorem~\ref{thm:continuous-diff} is optimal in the
sense that it can only ever assert the existence of a jointly continuous
slope function, but nothing can be said about an arbitrary slope
function.
\begin{example}
  \label{ex:sep-contl-2}
  Consider the zero function $f(x):=0$ for all $x\in\mathbb R^2$, whose
  derivative is given by $Df(x)=[0\;0]$ for all $x\in\mathbb
  R^2$. Suppose that $g\colon\mathbb R^2\times\mathbb R^2\to\mathbb R$
  is such that
  \begin{equation}
    \label{eq:g-y-cont}
    \lim_{y\to x}g(x,y)=0.
  \end{equation}
  Then the linear operators
  $\Phi(x,y)\in\mathcal L(\mathbb R^2,\mathbb R)$ given by
  \begin{displaymath}
    \Phi(x,y):=g(x,y)
    \begin{bmatrix}
      -\dfrac{y_2-x_2}{\|y-x\|}&\dfrac{y_1-x_1}{\|y-x\|}
    \end{bmatrix}
  \end{displaymath}
  if $x\neq y$ and $\Phi(x,x):=[0\;0]$ defines a slope function for $f$
  at $x$. Indeed, note that $\Phi(x,y)(y-x)=0$ and that
  $\|\Phi(x,y)\|\leq|g(x,y)|$ for all $x,y\in\mathbb R^2$. Therefore, by
  \eqref{eq:g-y-cont}, for every $x\in\mathbb R^2$ we have
  $\Phi(x,y)\to[0\;0]$ as $y\to x$. We choose $g$ to be given by
  \begin{displaymath}
    g(x,y):=
    \begin{cases}
      1&\text{if $y=0$ and $x\neq 0$,}\\
      0&\text{otherwise.}
    \end{cases}
  \end{displaymath}
  Then \eqref{eq:g-y-cont} holds for all $x\in\mathbb R^2$, but
  $g(x,0)=1\to 1\neq 0=g(0,0)$ as $x\to 0$. In particular,
  \begin{displaymath}
    \lim_{y\to 0}\Phi(0,y)=
    \begin{bmatrix}
      0&0
    \end{bmatrix}
    \qquad\text{but}\quad
    \lim_{x\to 0}\Phi(x,0)\quad\text{does not exist.}
  \end{displaymath}
  This means that $\Phi$ is not separately continuous as a function of
  $x$ and $y$ at $(0,0)$ even though $f$ is as smooth as we like. Given
  an arbitrary smooth function from $\mathbb R^2$ to $\mathbb R$ we can
  always add $\Phi$ to the corresponding slope function and get a badly
  behaved one. Likewise, we can do that at any point in the domain by
  translation.

  The example can be modified to work on any Banach space $E$ by looking
  at a pair of non-trivial complemented subspaces $E=E_1\oplus E_2$ and
  choosing $x\in E_1$ and $y\in E_2$.
\end{example}

In contrast to the above examples we show that at least in finite
dimensions, for any differentiable function (not necessarily
continuously differentiable) one can always construct a separately
continuous and symmetric slope function.

\paragraph{Symmetry and separate continuity of the slope function.}
We know that the slope function $\varphi$ of a differentiable function
of one variable is symmetric, that is, $\varphi(x,y)=\varphi(y,x)$. We
also know from previous discussions and Example~\ref{ex:sep-contl-1}
that this is not necessarily the case for any given slope function
$\Phi$ of a function of several variables. If $\Phi$ is separately
continuous, then the symmetric part
\begin{equation}
  \label{eq:symmetric-part}
  \Psi(x,y):=\frac{1}{2}\bigl(\Phi(x,y)+\Phi(y,x)\bigr)
\end{equation}
is a slope function. Hence, there is a symmetric slope function if and
only if there exists a separately continuous slope function. If $f$ is
continuously differentiable, then, by Theorem~\ref{thm:continuous-diff},
we have such a slope function. We could ask whether it is possible to
construct a separately continuous slope function for a function that is
just differentiable. It turns out that this is the case for a function
of finitely many variables.

Given a differentiable function $f\colon U\to\mathbb R^m$,
$U\subseteq\mathbb R^n$ open, we now construct a separately continuous
slope function. The construction comes closest to the definition of a
derivative for a function of one variable as a limit of secants. The
idea is to consider a secant plane and pass to the limit to obtain the
tangent plane.

For each pair of points $x,y$ set $v_1=\dfrac{y-x}{\|y-x\|}$ and choose
vectors $v_k$, $k=2,\dots,n$, so that $(v_1,v_2,\dots,v_{n})$ forms an
orthonormal basis of $\mathbb R^n$. In what follows we should keep in
mind that the vectors $v_k$ depend on the direction of $y-x$, but in
order to keep the notation simple we do not indicate that dependence
explicitly. We now define a linear operator
$\Phi(x,y)\in\mathcal L(\mathbb R^n,\mathbb R^m)$ by defining it on the
basis $(v_1,\dots,v_n)$ by
\begin{equation}
  \label{eq:1}
  \Phi(x,y)v_k:=\frac{f\bigl(x+\|y-x\|v_k\bigr)-f(x)}{\|y-x\|}
\end{equation}
for $k=1,\dots,n$. We claim that $\Phi$ is a slope function. By
\eqref{eq:1} and the definition of $v_1$,
\begin{displaymath}
  \Phi(x,y)(y-x)
  =\|y-x\|\Phi(x,y)v_1
  =f(y)-f(x).
\end{displaymath}
To check continuity at $x$ as a function of $y$, write $z\in \mathbb R^n$
in the form $z=\sum_{k=1}^n\alpha_kv_k$, where
$\alpha_k:=\langle v_k,z\rangle$.  As the basis $(v_1,\dots,v_n)$ is
orthonormal we have $\|z\|^2=\sum_{k=1}^n|\alpha_k|^2$ and thus, by the
Cauchy Schwarz inequality,
\begin{align*}
  \|\Phi(x,y)z
  &-Df(x)z\|
  =\Bigl\|\sum_{k=1}^n\alpha_k
    \frac{f\bigl(x+\|y-x\|v_k\bigr)-f(x)
    -Df(x)\|y-x\|v_k}{\|y-x\|}\Bigr\|\\
  &\leq\|z\|\sqrt{\sum_{k=1}^n
    \left(\frac{\bigl\|f\bigl(x+\|y-x\|v_k\bigr)-f(x)
    -Df(x)\|y-x\|v_k\bigr\|}{\|y-x\|}\right)^2}
  \to 0
\end{align*}
as $y\to x$ by differentiability of $f$ at $x$.  We next show that
$\Phi(x,y)$ is continuous as a function of $x$ as $x\to y$. The trick is
to rewrite $\Phi(x,y)$ with respect to the basis
\begin{displaymath}
  (w_1,\dots,w_n):=(-v_1,v_2-v_1,\dots,v_{n}-v_1).
\end{displaymath}
As $x=y-(y-x)=y-\|x-y\|v_1=y+\|x-y\|w_1$ we conclude that for
$k=2,\dots,n$,
\begin{equation}
  \label{eq:3}
  x+\|y-x\|v_k=y+\|x-y\|(v_k-v_1)=y+\|x-y\|w_k.
\end{equation}
Hence, by using \eqref{eq:1}, we obtain for $k=2,\dots,n$,
\begin{equation}
  \label{eq:4}
  \Phi(x,y)w_k=\Phi(x,y)v_k-\Phi(x,y)v_1
  =\frac{f\bigl(y+\|x-y\|w_k\bigr)-f(y)}{\|x-y\|}.
\end{equation}
Note that the final formula also applies to $k=1$.  Expressing $z$ in
terms of the basis $(w_1,\dots,w_n)$, it turns out that
\begin{displaymath}
  z=\sum_{k=1}^n\alpha_kv_k
  =\sum_{k=2}^n\alpha_kw_k-\Bigl(\sum_{k=1}^n\alpha_k\Bigr)w_1.
\end{displaymath}
If we set $\beta_1:=-\sum_{k=1}^n\alpha_k$ and $\beta_k:=\alpha_k$ for
$k=2,\dots,n$, we see that
\begin{displaymath}
  \sum_{k=1}^n|\beta_k|^2
  \leq \sum_{k=2}^n|\alpha_k|^2+\Bigl(\sum_{k=1}^n|\alpha_k|\Bigr)^2
  \leq (1+n)\|z\|^2.
\end{displaymath}
Hence, applying the Cauchy-Schwarz inequality as before, we have
\begin{align*}
  \|\Phi&(x,y)z
  -Df(y)z\|\\
  &=\Bigl\|\sum_{k=1}^n\beta_k
    \frac{f\bigl(y+\|x-y\|w_k\bigr)-f(y)
    -Df(y)\|x-y\|w_k}{\|x-y\|}\Bigr\|\\
  &\leq \sqrt{1+n}\|z\|\sqrt{\sum_{k=1}^n
    \left(\frac{\bigl\|f\bigl(y+\|x-y\|w_k\bigr)-f(y)
    -Df(y)\|x-y\|w_k\bigr\|}{\|x-y\|}\right)^2}
  \to 0
\end{align*}
as $x\to y$ by differentiability of $f$ at $y$. We conclude that
$\Phi(x,y)$ is separately continuous at every point $(x,x)$.

\begin{remark}
  (a) If $n=2$ there is a natural choice for $(v_1,v_2)$, but not for
  $n>2$. We choose $v_2$ to be the rotation of $v_1$ by $\pi/2$. More
  precisely, if $v_1=(z_1,z_2)$ we let $v_2=(-z_2,z_1)$.

  (b) The slope function $\Phi(x,y)$ constructed above is separately
  continuous at every point $(x,x)$. One could ask whether or not it is
  possible to choose it to be continuous at every $(x,y)$ with
  $x\neq y$. In our particular construction continuity is guaranteed if
  $(v_2,\dots,v_n)$ is continuous as a function of
  $v_1=\dfrac{y-x}{\|y-x\|}$. This is equivalent to finding $n-1$
  linearly independent solutions to the equation
  $\langle v_1,w\rangle=0$ depending continuously on $v_1$. Sufficient
  conditions for that are established in \cite{eckmann:43:sll}, and
  explicit orthonormal bases are given for dimensions $n=2$, $4$ and
  $8$. As shown in \cite{adams:62:vfs}, these are the only
  possibilities!  If $n\leq 8$ we can construct a slope function $\Phi$
  that is globally separately continuous if we artificially look at $f$
  as a function of $8$ variables by making it constant in $8-n$
  variables, and then restrict the constructed slope function to $n$
  variables just like a partial derivative; see
  Proposition~\ref{prop:partial-derivatives}. We do not claim that the
  construction of a globally separately continuous slope function is
  impossible for $n>8$, but only that some other method is required if
  it can be done.
\end{remark}

\paragraph{Lipschitz continuous functions.}
Let $E,F$ be Banach spaces and $U\subseteq E$ open. Recall that a
function $f\colon U\to F$ is called Lipschitz continuous if there exists
$L>0$ such that
\begin{equation}
  \label{eq:lipschitz}
  \|f(x)-f(y)\|_F\leq L\|x-y\|_E
\end{equation}
for all $x,y\in U$. We call $L$ a Lipschitz constant of $f$.

\begin{proposition}[Derivatives of Lipschitz functions]
  \label{prop:lipschitz}
  Let $E,F$ be Banach spaces and $U\subseteq E$ open. Assume that
  $f\colon U\to F$ is differentiable at $x\in U$. If $f$ is Lipschitz
  continuous with Lipschitz constant $L$, then
  $\|Df(x)\|_{\mathcal L(E,F)}\leq L$.
\end{proposition}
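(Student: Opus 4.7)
The plan is to exploit the slope function characterisation directly: differentiability at $x$ gives a slope function $\Phi_x\colon U\to\mathcal L(E,F)$, continuous at $x$, with $Df(x)=\Phi_x(x)$, and the Lipschitz hypothesis gives a bound on $f(y)-f(x)=\Phi_x(y)(y-x)$ in the direction $y-x$. Combining these, I would first extract a directional bound and then pass to the limit using continuity of $\Phi_x$ at $x$.

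More precisely, I would fix an arbitrary $z\in E$ with $\|z\|_E=1$. Since $U$ is open, there exists $t_0>0$ such that $x+tz\in U$ for every $t\in(0,t_0]$. For such $t$, the defining identity
\begin{displaymath}
  \Phi_x(x+tz)(tz)=f(x+tz)-f(x)
\end{displaymath}
together with the Lipschitz bound $\|f(x+tz)-f(x)\|_F\leq L\|tz\|_E=Lt$ yields $\|\Phi_x(x+tz)z\|_F\leq L$ after dividing by $t$.

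Next, I would let $t\to 0+$. By continuity of $\Phi_x$ at $x$ in the operator norm, $\Phi_x(x+tz)\to\Phi_x(x)=Df(x)$ in $\mathcal L(E,F)$, and in particular $\Phi_x(x+tz)z\to Df(x)z$ in $F$. Continuity of the norm then gives $\|Df(x)z\|_F\leq L$. Taking the supremum over unit vectors $z$ yields $\|Df(x)\|_{\mathcal L(E,F)}\leq L$, as required.

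There is no substantial obstacle; the argument is a direct illustration of the philosophy pushed throughout the paper, namely that Carathéodory differentiability reduces infinitesimal questions to a limit of continuous quantities. The only point worth noting is that the Lipschitz inequality only controls $\Phi_x(y)$ along the single direction $y-x$, so a one-parameter family $y=x+tz$ along each fixed direction $z$ is needed in order to conclude a bound on the full operator norm of the limit $\Phi_x(x)=Df(x)$.
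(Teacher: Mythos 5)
Your proof is correct and follows essentially the same route as the paper: apply the slope function identity along $y=x+tz$, use the Lipschitz bound, divide by $t$, and let $t\to 0+$ using continuity of $\Phi_x$ at $x$. The only cosmetic difference is that you normalise $z$ to a unit vector, whereas the paper keeps a general $z$ and concludes $\|Df(x)z\|\leq L\|z\|$ before invoking the definition of the operator norm.
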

\begin{proof}
  Assume that $f$ is Lipschitz continuous with Lipschitz constant
  $L$. Let $\Phi$ be a slope function for $f$ at $x$. Then, for $z\in E$,
  we have
  \begin{displaymath}
    \|\Phi(x+tz)tz\|
    =\|f(x+tz)-f(x)\|
    \leq L\|tz\|
  \end{displaymath}
  whenever $t>0$ is small enough. Dividing by $t$ and then letting
  $t\to 0+$, we obtain
  \begin{displaymath}
    \|Df(x)z\|
    =\lim_{t\to 0+}\|\Phi(x+tz)z\|
    \leq L\|z\|
  \end{displaymath}
  for all $z\in E$. By definition of the operator norm
  $\|Df(x)\|_{\mathcal L(E,F)}\leq L$.
\end{proof}
Note that the converse is true when $U$ is convex. Indeed, by the mean
value inequality in Theorem~\ref{thm:mvt-ineq}, for every $x,y\in U$
there exists $c\in\lBrack x,y\rBrack$ such that
\begin{displaymath}
  \|f(y)-f(x)\|_F\leq\|Df(c)(y-x)\|_F
  \leq L\|y-x\|_E.
\end{displaymath}

\section{Application: Differentiable dependence of fixed points}
\label{sec:fixed-points}
The aim of this section is to use our approach to derivatives to give a
conceptually simple proof of the differentiable dependence of fixed
points in the Banach Fixed Point Theorem. The theorem is known, see for
instance \cite[Section~1.2.6]{henry:81:gts} or
\cite{hale:69:ode,brooks:09:cmp}.

Let $E$, $F$ be Banach spaces and let $U\subseteq E$ and
$\Lambda\subseteq F$ be non-empty open sets. Let
$f\colon \bar U\times\Lambda\to\bar U$ be a \emph{uniform contraction}
in $x\in U$. More precisely, assume that there exists $L\in(0,1)$ such
that
\begin{equation}
  \label{eq:uniform-contraction}
  \|f(x,\lambda)-f(y,\lambda)\|_E\leq L\|x-y\|_E
\end{equation}
for all $x,y\in \bar U$ and all $\lambda\in\Lambda$. By the Banach fixed
point theorem, for every $\lambda\in\Lambda$ there exists a unique fixed
point $x_\lambda\in\bar U$.

\begin{proposition}[Continuous dependence of fixed points]
  Assume that $f\colon\bar U\times\Lambda\to E$ satisfies
  \eqref{eq:uniform-contraction} with $L<1$. For every $\mu\in\Lambda$,
  let $x_\mu\in\bar U$ be the unique fixed point of $f(\cdot\,,\mu)$. If
  $\lambda\in\Lambda$ is such that $\mu\mapsto f(x_\lambda,\mu)$ is
  continuous at $\lambda$, then the map $\Lambda\to\bar U$,
  $\mu\mapsto x_\mu$ is continuous at $\lambda$.
\end{proposition}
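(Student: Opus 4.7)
The plan is to exploit the fixed point identity $x_\mu = f(x_\mu,\mu)$ together with the uniform contraction estimate in a direct triangle inequality argument. The main observation is that the difference $x_\mu - x_\lambda$ can be bounded by the difference of $f$ evaluated at the fixed parameter $\lambda$, which is the only place continuity assumptions are made.

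First I would write, for $\mu\in\Lambda$,
\begin{displaymath}
  \|x_\mu - x_\lambda\|_E
  = \|f(x_\mu,\mu) - f(x_\lambda,\lambda)\|_E
  \leq \|f(x_\mu,\mu)-f(x_\lambda,\mu)\|_E + \|f(x_\lambda,\mu)-f(x_\lambda,\lambda)\|_E.
\end{displaymath}
The first summand on the right is bounded by $L\|x_\mu - x_\lambda\|_E$ by the uniform contraction hypothesis \eqref{eq:uniform-contraction}. Rearranging and using $1-L>0$, I obtain
\begin{displaymath}
  \|x_\mu - x_\lambda\|_E
  \leq \frac{1}{1-L}\|f(x_\lambda,\mu) - f(x_\lambda,\lambda)\|_E.
\end{displaymath}

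Finally, since by hypothesis $\mu\mapsto f(x_\lambda,\mu)$ is continuous at $\lambda$, the right hand side tends to $0$ as $\mu\to\lambda$, which proves the continuity of $\mu\mapsto x_\mu$ at $\lambda$. There is no real obstacle here: the whole argument is a one-line application of the triangle inequality combined with the contraction estimate, and the only subtlety is making sure $x_\mu,x_\lambda$ lie in $\bar U$ so that the contraction bound \eqref{eq:uniform-contraction} applies, which is guaranteed by the Banach fixed point theorem on the complete metric space $\bar U$.
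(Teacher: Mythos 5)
Your proof is correct and is essentially identical to the paper's own argument: both use the fixed point identity, split $f(x_\mu,\mu)-f(x_\lambda,\lambda)$ by the triangle inequality, absorb the $L\|x_\mu-x_\lambda\|$ term using $L<1$, and conclude from the continuity of $\mu\mapsto f(x_\lambda,\mu)$ at $\lambda$. Nothing further is needed.
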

\begin{proof}
  Using the assumption that $f$ is a uniform contraction, we have
  \begin{align*}
    \|x_\mu-x_\lambda\|
    &=\|f(x_\mu,\mu)-f(x_\lambda,\lambda)\|\\
    &\leq\|f(x_\mu,\mu)-f(x_\lambda,\mu)\|
      +\|f(x_\lambda,\mu)-f(x_\lambda,\lambda)\|\\
    &\leq L\|x_\mu-x_\lambda\|
      +\|f(x_\lambda,\mu)-f(x_\lambda,\lambda)\|.
  \end{align*}
  As $0<L<1$, by the continuity of $\mu\mapsto f(x_\lambda,\mu)$ at
  $\lambda$,
  \begin{displaymath}
    \|x_\mu-x_\lambda\|
    \leq\frac{1}{1-L}\|f(x_\lambda,\mu)-f(x_\lambda,\lambda)\|\to 0
  \end{displaymath}
  as $\mu\to\lambda$.
\end{proof}
We next show that the fixed points $x_\lambda$ depend differentiably on
$\lambda$. The reader is invited to compare our proof to a proof based
on Fr\'echet derivatives given, for instance, in
\cite[Section~1.2.6]{henry:81:gts}. By exploiting continuity properties
of the slope function, we can avoid all $\varepsilon$-$\delta$ arguments
and provide a conceptually cleaner proof.

\begin{theorem}[Differentiable dependence of fixed points]
  Assume that $f\in C^1(\bar U\times\Lambda,E)$ satisfies
  \eqref{eq:uniform-contraction} with $L<1$. For every $\mu\in\Lambda$,
  let $x_\mu\in\bar U$ be the unique fixed point of
  $f(\cdot\,,\mu)$. Then the map $\Lambda\to\bar U$, $\mu\mapsto x_\mu$
  is continuously differentiable.
\end{theorem}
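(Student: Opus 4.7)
The plan is to leverage the slope-function machinery together with the earlier result that a uniformly contractive map has $\|D_1f(x,\mu)\|\le L<1$ (Proposition on derivatives of Lipschitz functions) so that the ``implicit-function''-style inversion $(I-D_1f)^{-1}$ is always well defined.

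First I would fix $\lambda\in\Lambda$ and, using that $f\in C^1$, invoke Theorem~\ref{thm:continuous-diff} to obtain a slope function $\Phi$ for $f$ that is jointly continuous at $\bigl((x_\lambda,\lambda),(x_\lambda,\lambda)\bigr)$. By the splitting used in Proposition~\ref{prop:partial-derivatives}, I would define the partial slope functions
\begin{displaymath}
  \Phi_1(\mu)z_1:=\Phi\bigl((x_\lambda,\lambda),(x_\mu,\mu)\bigr)(z_1,0),\qquad
  \Phi_2(\mu)z_2:=\Phi\bigl((x_\lambda,\lambda),(x_\mu,\mu)\bigr)(0,z_2),
\end{displaymath}
so that the defining identity $f(x_\mu,\mu)=f(x_\lambda,\lambda)+\Phi\bigl(\cdots\bigr)(x_\mu-x_\lambda,\mu-\lambda)$, combined with $x_\mu=f(x_\mu,\mu)$ and $x_\lambda=f(x_\lambda,\lambda)$, produces the key identity
\begin{displaymath}
  (I-\Phi_1(\mu))(x_\mu-x_\lambda)=\Phi_2(\mu)(\mu-\lambda).
\end{displaymath}

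Next I would argue that $I-\Phi_1(\mu)$ is invertible for $\mu$ near $\lambda$. By Proposition~\ref{prop:lipschitz} applied to $x\mapsto f(x,\mu)$, we have $\|D_1f(x_\lambda,\lambda)\|\le L<1$. Since the slope function $\Phi$ is jointly continuous at the diagonal point and $x_\mu\to x_\lambda$ by the continuous-dependence proposition, $\Phi_1(\mu)\to D_1f(x_\lambda,\lambda)$ in operator norm. Hence for $\mu$ sufficiently close to $\lambda$, $\|\Phi_1(\mu)\|<1$ and $I-\Phi_1(\mu)$ is invertible with inverse given by a Neumann series; more fundamentally, continuity of inversion on the open set of invertible operators (used in the ``inversion'' theorem) gives
\begin{displaymath}
  \Psi(\mu):=(I-\Phi_1(\mu))^{-1}\Phi_2(\mu)\xrightarrow{\mu\to\lambda}
  (I-D_1f(x_\lambda,\lambda))^{-1}D_2f(x_\lambda,\lambda)
\end{displaymath}
in $\mathcal L(F,E)$. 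The rearranged identity reads $x_\mu=x_\lambda+\Psi(\mu)(\mu-\lambda)$, so $\Psi$ is a slope function for $\mu\mapsto x_\mu$ at $\lambda$, continuous at $\mu=\lambda$. This gives differentiability at $\lambda$ with
\begin{displaymath}
  D(\mu\mapsto x_\mu)(\lambda)=(I-D_1f(x_\lambda,\lambda))^{-1}D_2f(x_\lambda,\lambda).
\end{displaymath}

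For continuous differentiability I would let $\lambda$ vary: $\lambda\mapsto x_\lambda$ is continuous by the previous proposition, $D_1f$ and $D_2f$ are continuous by hypothesis, and inversion $B\mapsto B^{-1}$ is continuous on invertible operators (as used in the ``inversion'' theorem), so the composition giving the derivative is continuous in $\lambda$. The main obstacle is the bookkeeping in the first paragraph: one must verify that decomposing $\Phi$ into $\Phi_1,\Phi_2$ yields partial slope functions whose joint continuity in the original product variable descends to operator-norm continuity of both partial pieces as $\mu\to\lambda$ (so that, in particular, $\Phi_1(\mu)\to D_1f(x_\lambda,\lambda)$ along the curve $\mu\mapsto(x_\mu,\mu)$, not merely along a fixed direction). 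Once that continuity is in hand, the invertibility of $I-\Phi_1(\mu)$ follows cleanly from the Lipschitz bound on $D_1f$, and the rest is a direct application of the chain rule and continuity of inversion.
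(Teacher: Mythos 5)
Your argument is correct and arrives at the same slope function $\Psi(\mu)=\bigl(I-\Phi_1(\mu)\bigr)^{-1}\Phi_2(\mu)$ and the same derivative formula as the paper, but via a genuinely different decomposition. The paper telescopes $f(x_\mu,\mu)-f(x_\lambda,\lambda)$ into the two increments $f(x_\mu,\mu)-f(x_\lambda,\mu)$ and $f(x_\lambda,\mu)-f(x_\lambda,\lambda)$ and evaluates the slope function at two \emph{different} base points, $(x_\lambda,\mu)$ and $(x_\lambda,\lambda)$; since the first base point moves with $\mu$, the paper genuinely needs the joint continuity of $\Phi$ on the diagonal supplied by Theorem~\ref{thm:continuous-diff}. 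You instead evaluate the slope function once, at the fixed base point $(x_\lambda,\lambda)$ with evaluation point $(x_\mu,\mu)$, and split the resulting linear operator by linearity into its actions on $(z_1,0)$ and $(0,z_2)$. The bookkeeping concern you raise at the end resolves itself in your favour: because your base point is fixed, the convergences $\Phi_1(\mu)\to D_1f(x_\lambda,\lambda)$ and $\Phi_2(\mu)\to D_2f(x_\lambda,\lambda)$ already follow from the continuity of $y\mapsto\Phi(x,y)$ at $y=x$ built into the definition of a slope function, together with $x_\mu\to x_\lambda$ and the fact that restricting an operator on $E\times F$ to one factor does not increase its norm (up to the constant of the product norm); so Theorem~\ref{thm:continuous-diff} is not actually needed for differentiability at $\lambda$, and the $C^1$ hypothesis is used only to make the closed formula $\bigl(I-D_1f(x_\lambda,\lambda)\bigr)^{-1}D_2f(x_\lambda,\lambda)$ continuous in $\lambda$. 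The remaining ingredients --- the bound $\|D_1f(x_\lambda,\lambda)\|\le L<1$ from Proposition~\ref{prop:lipschitz} and \eqref{eq:uniform-contraction}, the Neumann series, and continuity of inversion --- are used identically in both proofs. One cosmetic point: your $\Phi_1(\mu)$ is not literally the partial slope function of Proposition~\ref{prop:partial-derivatives} (there the second coordinate of the evaluation point is frozen at its base value), but since you define it explicitly this has no effect on the argument.
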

\begin{proof}
  The idea is to use algebraic manipulations to find a slope function
  for the fixed points. If $\Phi$ is a slope function for $f$ and
  $x_\lambda$, $x_\mu$ are fixed points, then
  \begin{displaymath}
    \begin{split}
    x_\mu-x_\lambda
    &=f(x_\mu,\mu)-f(x_\lambda,\lambda)\\
    &=f(x_\mu,\mu)-f(x_\lambda,\mu)+f(x_\lambda,\mu)-f(x_\lambda,\lambda)\\
    &=\Phi\bigl((x_\lambda,\mu),(x_\mu,\mu)\bigr)(x_\mu-x_\lambda,0)
      +\Phi\bigl((x_\lambda,\lambda),(x_\lambda,\mu)\bigr)(0,\mu-\lambda).\\
    &=\Phi_1\bigl((x_\lambda,\mu),x_\mu\bigr)(x_\mu-x_\lambda)
      +\Phi_2\bigl((x_\lambda,\lambda),\mu\bigr)(\mu-\lambda),
    \end{split}
  \end{displaymath}
  where $\Phi_1$ and $\Phi_2$ are the partial slope functions for the
  functions $x\mapsto f(x,\lambda)$ and $\lambda\mapsto f(x,\lambda)$
  respectively, as introduced in
  Proposition~\ref{prop:partial-derivatives}.  Rearranging we see that
  \begin{displaymath}
    \bigl[I-\Phi_1\bigl((x_\lambda,\mu),x_\mu)\bigr)\bigr]
    (x_\mu-x_\lambda)
    =\Phi_2\bigl((x_\lambda,\lambda),\mu\bigr)(\mu-\lambda).
  \end{displaymath}
  Since $f$ is continuously differentiable on $\bar U\times\Lambda$,
  Theorem~\ref{thm:continuous-diff} allows us to choose $\Phi$ to be
  jointly continuous at
  $\bigl((\lambda,x_\lambda),(\lambda,x_\lambda)\bigr)$. Hence, as
  $L\in(0,1)$ and $\mu\mapsto x_\mu$ is continuous,
  Proposition~\ref{prop:lipschitz} implies the existence of $\delta>0$
  such that
  $\bigl\|\Phi_1\bigl((x_\lambda,\mu),x_\mu\bigr) \bigr\|_{\mathcal
    L(E)}<1$ whenever $\|\lambda-\mu\|<\delta$. Thus
  $\bigl[I-\Phi_1\bigl((x_\lambda,\mu),x_\mu\bigr)\bigr]^{-1}$ exists by
  a Neumann series expansion; see for instance
  \cite[Theorem~IV.1.4]{taylor:80:ifa}. Hence, if
  $\|\mu-\lambda\|<\delta$,
  \begin{displaymath}
    x_\mu=x_\lambda
    +\bigl[I-\Phi_1\bigl((x_\lambda,\mu),x_\mu\bigr)\bigr]^{-1}
    \Phi_2\bigl((x_\lambda,\mu),\lambda\bigr)(\mu-\lambda).
  \end{displaymath}
  Due to the joint continuity of $\Phi$ at
  $\bigl((\lambda,x_\lambda),(\lambda,x_\lambda)\bigr)$ and the
  continuity of inversion, we conclude that $\mu\mapsto x_\mu$ is
  differentiable at $\lambda$ with slope function given by
  \begin{displaymath}
    \Psi(\lambda,\mu)\gamma
    :=\bigl[I-\Phi_1\bigl((x_\lambda,\mu),x_\mu\bigr)\bigr]^{-1}
    \Phi_2\bigl((x_\lambda,\mu),\lambda\bigr)\gamma
  \end{displaymath}
  for all $\gamma\in F$ and derivative
  \begin{math}
    \Psi(\lambda,\lambda)
    =\bigl[I-D_xf(x_\lambda,\lambda)\bigr]^{-1}
    D_\lambda f(x_\lambda,\lambda)\in\mathcal L(F,E).
  \end{math}
\end{proof}

\pdfbookmark[1]{\refname}{biblio}%

\end{document}